\theoremstyle{plain} 
\newtheorem{theorem}{Theorem}
\newtheorem{proposition}{Proposition}
\newtheorem{corollary}{Corollary}[section]
\newtheorem{lemma}[corollary]{Lemma}
\theoremstyle{remark}
\newcommand{\NN}{\mathbb{N}}
\newcommand{\QQ}{\mathbb{Q}}
\newcommand{\RR}{\mathbb{R}}
\newcommand{\Id}{\operatorname{Id}}
\newcommand{\Set}[1]{\left\{ #1  \right\}}
\newcommand{\SetSuchThat}[2]{\left\{\, #1 \ | \ #2 \, \right\}}
\newcommand{\Exp}[1]{e^{#1}}
\newcommand{\ad}[1]{\operatorname{ad}_{#1}}
\newcommand{\Bra}[1]{\left(#1\right)}
\newcommand{\Ring}{\mathbf{k}}
\newcommand{\Algebra}{R}
\newcommand{\RotaOp}{P}
\newcommand{\RotaOpT}{\tilde P}
\newcommand{\RotaOpBra}[1]{\RotaOp\left(#1\right)}
\newcommand{\RotaOpTBra}[1]{\RotaOpT\left(#1\right)}
\newcommand{\QInt}{P_q}
\newcommand{\QIntTwo}{{\bar P}_q}
\newcommand{\CHL}{\chi_\lambda}
\newcommand{\CHZ}{\chi_0}
\newcommand{\BCH}{{\rm BCH}}
\newcommand{\Functions}{C^\omega (\RR)}
\newcommand{\RotaInt}{P_{\int}}
\begin{document}

\title{Inhomogeneous linear equation in Rota-Baxter algebra}

\author{Gabriel Pietrzkowski}

\address{ University of Warsaw, Banacha 2, 02-097 Warsaw, Poland} 


\keywords{Rota-Baxter algebra, Rota-Baxter operator, Eulerian identity, Spitzer's identity}
\subjclass[2010]{13P99, 16W99, 16Z05}

\begin{abstract}
We consider a complete filtered Rota-Baxter algebra of weight $\lambda$ over a commutative ring. Finding the unique solution of a non-homogeneous linear algebraic equation in this algebra, we generalize Spitzer's identity in both commutative and non-commutative cases. As an application, considering the Rota-Baxter algebra of power series in one variable with q-integral as the Rota-Baxter operator, we show certain Eulerian identities. 
\end{abstract}

\maketitle

\section{Introduction}
\label{sec:Introduction}

Let $\Ring$ be a commutative ring and $\Algebra$ be a $\Ring$-algebra. For a fixed $\lambda\in\Ring$ assume we have a linear operator $\RotaOp : \Algebra \to \Algebra$ satisfying
\begin{align}
\label{eq:BaxterFormula}
 \RotaOp(x)\RotaOp(y) = \RotaOp(x\RotaOp(y)) + \RotaOp(\RotaOp(x)y) + \lambda \RotaOp(xy)
\end{align}
for all $x,y\in\Algebra$. Then $(\Algebra,\RotaOp)$ is a Rota-Baxter algebra of weight $\lambda$, and $\RotaOp$ is called a Rota-Baxter operator of weight $\lambda$. There are plenty of examples of Rota-Baxter algebras in different mathematical areas. The $\RR$-algebra of analytic functions $\Functions$ on the real line, with the integral operator is a R-B algebra of weight $0$ (see section \ref{sec:Result} for more details). Also  the algebra of linear operators with the integral operator is a non-commutative R-B algebra of weight $0$.  The algebra of formal power series in variable $t$ with rational coefficients $\QQ[[t]]$, with  the $q$-integral is a Rota-Baxter algebra of weight $1$ (see section \ref{sec:Eulerian})\cite{rota1969baxter,  guo2012introduction}. The algebra of sequences $(a_n)$ with values in $\Ring$ and the partial sum operator $\RotaOp(a_1, a_2,\cdots) = (a_1, a_1 + a_2, a_1 + a_2 + a_3,\cdots)$ is a Rota-Baxter algebra of weight $-1$ \cite{baxter1960analytic,rota1969baxter, 
guo2012introduction}. The algebra of functions $\varphi(t) = \int_{-\infty}^\infty e^{itx} dF(x)$, where $F:\RR\to\RR$ is a function of 
bounded variation such that $\lim_{x\to - \infty}F(x) = F(-\infty)$ exists, with an operator $\RotaOp(\varphi)(t) = \int_{0}^\infty e^{itx} dF(x) + F(0) - F(-\infty)$ is a Rota-Baxter algebra of weight $-1$ \cite{rota1972fluctuation,guo2012introduction}. Many other examples can be found in \cite{rota1969baxter,rota1972fluctuation,guo2012introduction,rota1995baxter}. 

Among many interesting results concerning Rota-Baxter algebras (see a monograph \cite{guo2012introduction}) there is a founding one, Spitzer's identity
\begin{align*}
 \exp \left(\RotaOp\left(\lambda^{-1}\log(1+\lambda a) \right) \right) =\sum_{n=0}^\infty \underbrace{\RotaOp(a\cdots\RotaOp(a}_{n})\cdots),
\end{align*}
where $a\in\Algebra$.
The name comes after Frank Spitzer who in 1956 gave expression for the characteristic function of a class of random variables using combinatorial tools \cite{spitzer1956combinatorial}.  Four years later Glen Baxter \cite{baxter1960analytic} realized that the identity given by Spitzer can by obtained using integral operator satisfying  \eqref{eq:BaxterFormula}. Actually he proved the above identity for any operator satisfying \eqref{eq:BaxterFormula} in a given algebra, and then applied this result for the above mentioned Rota-Baxter algebra of functions $\varphi$. During the next forty years the result of Baxter has been discussed in a narrow range and the greatest contribution has been made by Gian Carlo Rota \cite{rota1969baxter,rota1969baxterII,rota1972fluctuation,rota1995baxter}, Pierre Cartier \cite{cartier1972structure}, John F.C. Kingman \cite{kingman1962spitzer}, Frederic V. Atkinson \cite{atkinson1963some}, 
and a few others. The big breakthrough has been started 
in 2000 by Li Guo, and later on by his collaborators, who investigate Rota-Baxter algebras in many different directions. From the point of view of this article the most important achievement is a generalization of Spitzer's identity to non-commutative Rota-Baxter algebras given by Kurusch Ebrahimi-Fard, Li Guo and Dirk Kreimer in the context of renormalization in perturbative quantum field theory \cite{ebrahimi2004spitzer} (see also \cite{ebrahimi2006birkhoff, guo2012introduction}).

In this article we derive a certain generalization of Spitzer's identity. Namely, for a commutative Rota-Baxter algebra $(\Algebra,\RotaOp)$ of weight $\lambda$ we show that
\begin{align}
\label{eq:TheEquality}
\sum_{n=0}^\infty \underbrace{\RotaOp(a_1\cdots\RotaOp(a_1}_{n}\RotaOp((1 + \lambda a_1)a_0))\cdots)  
= \Exp {\RotaOp\left(\lambda^{-1}\log(1+\lambda a_1) \right) } 
 \RotaOp\left(\Exp {-\RotaOp\left(\lambda^{-1}\log(1+\lambda a_1) \right) }a_0\right)
\end{align}
 if $\lambda\in\Ring$ is not a zero divisor in $\Algebra$, and
\begin{align}
\label{eq:TheEqualityZero}
\sum_{n=0}^\infty \underbrace{\RotaOp(a_1\cdots\RotaOp(a_1}_{n}\RotaOp(a_0)\cdots) 
= \Exp{\RotaOp(a_1)}\RotaOp\Bra{\Exp{-\RotaOp(a_1)}a_0}
\end{align}
if $\lambda = 0$. Also for a non-commutative Rota-Baxter algebra $(\Algebra,\RotaOp)$ of weight $\lambda$ (assuming $\lambda\in\Ring$ is not a zero divisor in $\Algebra$) we show that
\begin{align}
\label{eq:TheEqualityNonCom}
\sum_{n=0}^\infty \underbrace{\RotaOp(a_1\cdots\RotaOp(a_1}_{n}\RotaOp((1 + \lambda a_1)a_0))\cdots) 
= \Exp {\left(\RotaOp\left(\CHL\Bra{\lambda^{-1}\log(1+\lambda a_1)} \right) \right)} 
 \RotaOp\left(\Exp {\left(-\RotaOp\left(\CHL\Bra{\lambda^{-1}\log(1+\lambda a_1)} \right) \right)}a_0\right)
\end{align}
where  $\CHL$ is the $\BCH$-recursion introduced in \cite{ebrahimi2004spitzer} and defined by \eqref{eq:CHL} in section \ref{sec:Result}. Similarly, for a non-commutative Rota-Baxter algebra $(\Algebra,\RotaOp)$ of weight $0$ it occurs that
\begin{align}
\label{eq:TheEqualityNonComZero}
\sum_{n=0}^\infty \underbrace{\RotaOp(a_1\cdots\RotaOp(a_1}_{n}\RotaOp(a_0))\cdots) 
= \Exp{\RotaOpBra{\CHZ(a_1)}}
 \RotaOp\left(\Exp{-\RotaOpBra{\CHZ(a_1)}}a_0\right).
\end{align}
where  $\CHZ$ is the zero $\BCH$-recursion introduced in \cite{ebrahimi2006birkhoff} and defined by \eqref{eq:CHZ} in section \ref{sec:Result}.

These results are collected in Theorems \ref{thm:MainCommutative} and \ref{thm:MainNonCommutative} and Corollary \ref{cor:MainNonCommutativeZero} in section \ref{sec:Result}. They are preceded by the definition of complete filtered Rota-Baxter algebra, certain facts used in the article, and the origin of these results. In section \ref{sec:Result} we also post Propositions \ref{prop:Eulerian} and \ref{prop:EulerianTwo} containing certain Eulerian identities. In section \ref{sec:Proof} we prove the two stated theorems, and in section \ref{sec:Eulerian} we apply Theorem \ref{thm:MainCommutative} in 
the algebra $\QQ[[t]]$, with  $q$-integral as the Rota-Baxter operator, to show the Eulerian identities stated in the propositions.  
 
\section{Results}
\label{sec:Result}

We begin by introducing the most important notions and facts about Rota-Baxter algebras (we will follow \cite{guo2012introduction}).
Let $\Ring$ be a commutative ring and $\Algebra$ be a $\Ring$-algebra. If there exist $\lambda\in\Ring$ and a linear operator $\RotaOp : \Algebra \to \Algebra$ satisfying the Rota-Baxter equation
\begin{align}
 \label{eq:RotaBaxter}
 \RotaOp(x)\RotaOp(y) = \RotaOp(x\RotaOp(y)) + \RotaOp(\RotaOp(x)y) + \lambda \RotaOp(xy)
\end{align}
for all $x,y\in\Algebra$, then $(\Algebra,\RotaOp)$ is called a Rota-Baxter algebra of weight $\lambda$, and $\RotaOp$ is called a Rota-Baxter operator of weight $\lambda$.  If $\Algebra$ is additionally commutative, we say $(\Algebra,\RotaOp)$ is a commutative Rota-Baxter algebra of weight $\lambda$.
Set $\RotaOpT: \Algebra \to \Algebra$, 
$$
\RotaOpT(x) = -\lambda x - \RotaOp(x).
$$ 
Then $(\Algebra,\RotaOpT)$ is also a Rota-Baxter algebra of weight $\lambda$, as one can easily check. 

In order to consider the exponential and the logarithmic functions in the algebra we need to assure convergence of series. Therefore we assume $\Algebra$ is a filtered algebra, namely for $n\in\NN\cup\Set{0}$ there exists a non-unitary subalgebra $\Algebra_n \subset \Algebra$ such that  $\Algebra = \Algebra_0$, $\Algebra_{n+1}\subset \Algebra_n$, $\bigcap_n \Algebra_n = \Set{0}$, and for all $n,m \in\NN\cup\Set{0}$ we have $\Algebra_n\Algebra_m \subset \Algebra_{n+m}$. In the filtered algebra we define a metric $d:\Algebra\times\Algebra\to \RR$ given by 
\begin{align*}
 d(x,y) = \inf\SetSuchThat{2^{-n}}{x-y \in \Algebra_n}.
\end{align*}
We say that an algebra $\Algebra$ with filtration $\SetSuchThat{\Algebra_n}{n\in\NN\cup\Set{0}}$ is a complete filtered algebra if $d$ is a complete metric on $\Algebra$. 
We say that $(\Algebra,\Algebra_n,\RotaOp)$ is a complete filtered Rota-Baxter algebra of weight $\lambda$ if $\Algebra$ is a complete filtered algebra and $\RotaOp(\Algebra_n)\subset\Algebra_n$ for all $n\in\NN\cup\Set{0}$.
It is easy to see that for a sequence $x_n\in\Algebra_n$, $n\in\NN\cup\Set{0}$, a series $\sum_{n=0}^\infty x_n$ is convergent in $\Algebra$. Therefore, the exponential $\exp:\Algebra_1\to\Algebra$ and the logarithmic $\log:1 + \Algebra_1\to\Algebra$ functions given by the standard formulas
\begin{align*}
 \Exp x &= \exp x = \sum_{n=0}^\infty \frac {x^n}{n!} , & \log (1 + x) &= - \sum_{n=1}^\infty \frac {(-1)^n}{n} x^n
\end{align*}
are well defined.
We will use a well known facts that $\exp\log(1+x) = 1+x$ and $\log\exp(x) = x$ for all $x\in\Algebra_1$.

We are now ready to present results of this article. The most significant identity satisfied in a (commutative) Rota-Baxter algebra is Spitzer's identity introduced first by Spitzer \cite{spitzer1956combinatorial} in the probability theory, and then abstract-algebraically described by Baxter \cite{baxter1960analytic} and Rota \cite{rota1969baxter}. In a modern language Spitzer's identity can be stated as follows \cite{guo2012introduction}. Let $(\Algebra,\Algebra_n,\RotaOp)$ be a commutative complete Rota-Baxter algebra of weight $\lambda$ such that $\lambda$ is not a zero divisor of $\Algebra$, and $a\in\Algebra_1$ is fixed. Then the equation
\begin{align}
 \label{eq:LinearHomogenEq}
 b = 1 + \RotaOp(ab)
\end{align}
has a unique solution
\begin{align*}
 b = \exp \left(\RotaOp\left(\lambda^{-1}\log(1+\lambda a) \right) \right),
\end{align*}
where, here and throughout the article, we use an abbreviation
\begin{align*}
 \lambda^{-1}\log(1+\lambda a) =  \sum_{n=1}^\infty \frac {(-\lambda)^{n-1}}{n} a^n,
\end{align*}
(so we do not need to assume $\lambda$ to be invertible in $\Ring$). 

Iterating the equation \eqref{eq:LinearHomogenEq}, it is easy to see that
\begin{align*}
 b = \sum_{n=0}^\infty \underbrace{\RotaOp(a\cdots\RotaOp(a}_{n})\cdots).
\end{align*}
Spitzer's identity is thus
\begin{align}
\label{eq:Spitzer}
 \exp \left(\RotaOp\left(\lambda^{-1}\log(1+\lambda a) \right) \right) =\sum_{n=0}^\infty \underbrace{\RotaOp(a\cdots\RotaOp(a}_{n})\cdots).
\end{align}
If $\lambda = 0$, then Spitzer's identity is even simpler
\begin{align*}
 \exp \left(\RotaOp(a) \right) = \sum_{n=0}^\infty \underbrace{\RotaOp(a\cdots\RotaOp(a}_{n})\cdots).
\end{align*}

Let us look at a certain example. Let $\Functions$ be the $\RR$-algebra of analytic functions on the real line, $\Functions_n\subset \Functions$ the functions $f:\RR\to\RR$ such that  $f(0) = f'(0) = \ldots = f^{(n-1)}(0) = 0$ (for $n\in\NN$), and $\RotaInt : \Functions \to \Functions$ be the integral operator given by
\begin{align}
\label{eq:RotaInt}
 \RotaInt(f)(t) = \int_0^t f(s) ds.
\end{align}
Using integration by parts formula, it easy to prove that $(\Functions, \Functions_n, \RotaInt)$ is a commutative complete filtered Rota-Baxter algebra of weight $0$. The equation \eqref{eq:LinearHomogenEq} in this algebra reads as 
\begin{align*}
 b(t) = 1 + \int_0^t a_1(s)b(s) ds.
\end{align*}
By the Picard iteration of this integral equation we get
\begin{align*}
 b(t) = \sum_{n=0}^\infty \underbrace{\RotaInt(a\cdots\RotaInt(a}_{n})\cdots)(t),
\end{align*}
On the other hand, differentiating it, we obtain a non-homogeneous non-autonomous linear differential equation
\begin{align*}
 \dot b(t) =  a_1(t)b(t), \qquad b(0) = 1,
\end{align*}
with a well known solution
\begin{align*}
 b(t) = \Exp{\RotaInt(a_1)(t)}
\end{align*}
Observe that  Spitzer's identity in $(\Functions, \Functions_n, \RotaInt)$ is nothing else but comparing these two formulas for $b(t)$.
Now, in a commutative complete filtered Rota-Baxter algebra $(\Algebra,\Algebra_n,\RotaOp)$ of weight $0$, consider the following generalization of the equation \eqref{eq:LinearHomogenEq} 
\begin{align}
 \label{eq:One}
 b = \RotaOp(a_0) + \RotaOp(a_1b),
\end{align}
 where $a_0,a_1\in\Algebra_1$ are fixed. In the special case $(\Functions, \Functions_n, \RotaInt)$, this equation is 
\begin{align*}
 b(t) = \int_0^t a_0(s) ds + \int_0^t a_1(s)b(s) ds.
\end{align*}
Once again, by the Picard iteration we obtain
\begin{align*}
 b(t) = \sum_{n=0}^\infty \underbrace{\RotaInt(a_1\cdots\RotaInt(a_1}_{n}\RotaInt(a_0))\cdots)(t),
\end{align*}
and on the other hand, differentiating it, we obtain a non-homogeneous non-autonomous linear differential equation
\begin{align*}
 \dot b(t) = a_0(t) + a_1(t)b(t), \qquad b(0) = 0,
\end{align*}
with a well known solution
\begin{align*}
 b(t) = \Exp{\RotaInt(a_1)(t)}\RotaInt(\Exp{-\RotaInt(a_1)}a_0)(t).
\end{align*}
This suggests that the solution of \eqref{eq:One} for any Rota-Baxter algebra of weight $0$ is
\begin{align*}
 b = \sum_{n=0}^\infty \underbrace{\RotaOp(a_1\cdots\RotaOp(a_1}_{n}\RotaOp(a_0))\cdots)(t) 
 = \exp(\RotaOp(a_1))\RotaOp(\exp(-\RotaOp(a_1))a_0).
\end{align*}
A natural question arise: if this formula can be extended for $\lambda \neq 0$. The answer is positive if we modify $a_0$ by a factor $1 + \lambda a_1$. We state this result in the following theorem.

\begin{theorem}
\label{thm:MainCommutative}
 Let $(\Algebra,\Algebra_n,\RotaOp)$ be a commutative complete filtered Rota-Baxter algebra of weight $\lambda$, and $a_0,a_1 \in\Algebra_1$ are fixed. Then the equation
\begin{align}
 \label{eq:LinearInhomogenEq}
b = \RotaOp((1 + \lambda a_1)a_0) + \RotaOp(a_1b)
\end{align}
\begin{enumerate}[(i)]
 \item has a unique solution
\begin{align}
 \label{eq:LinearInhomogenEqSolution}
 b = \exp \left(\RotaOp\left(\lambda^{-1}\log(1+\lambda a_1) \right) \right) 
 \RotaOp\left(\exp \left(-\RotaOp\left(\lambda^{-1}\log(1+\lambda a_1) \right) \right)a_0\right)
\end{align}
in case $\lambda$ is not a zero divisor of $\Algebra$, and moreover the equality \eqref{eq:TheEquality} is satisfied.
 \item has a unique solution
\begin{align}
 \label{eq:LinearInhomogenEqSolutionZero}
b = \exp(\RotaOp(a_1))\RotaOp(\exp(-\RotaOp(a_1))a_0)
\end{align}
in case $\lambda = 0$, and moreover the equality \eqref{eq:TheEqualityZero} is satisfied.
\end{enumerate}

\end{theorem}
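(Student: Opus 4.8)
The plan is to decouple the statement into two independent halves: first establish existence, uniqueness, and the series representation of the solution, and then verify by direct substitution that the closed form \eqref{eq:LinearInhomogenEqSolution} satisfies \eqref{eq:LinearInhomogenEq}. Uniqueness then forces the series and the closed form to coincide, which is precisely the content of \eqref{eq:TheEquality}.

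For existence and uniqueness I would read \eqref{eq:LinearInhomogenEq} as a fixed-point equation $b=\Phi(b)$ with $\Phi(b)=\RotaOp((1+\lambda a_1)a_0)+\RotaOp(a_1b)$. Since $a_1\in\Algebra_1$ and $\RotaOp(\Algebra_n)\subset\Algebra_n$, the map $b\mapsto\RotaOp(a_1b)$ sends $\Algebra_n$ into $\Algebra_{n+1}$, so $\Phi$ is a contraction for the metric $d$ and completeness yields a unique fixed point. Iterating $\Phi$ starting from $0$ and noting that the $n$-th term lies in $\Algebra_{n+1}$ (whence the series converges) identifies this unique solution with the left-hand side of \eqref{eq:TheEquality}. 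For uniqueness alone it suffices to observe that the difference $\delta$ of two solutions satisfies $\delta=\RotaOp(a_1\delta)$, so an induction on the filtration degree places $\delta\in\bigcap_n\Algebra_n=\Set{0}$.

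The heart of the argument is the verification step. Writing $\Omega=\RotaOp(\lambda^{-1}\log(1+\lambda a_1))\in\Algebra_1$ and $\beta=\Exp{\Omega}$, commutativity gives $\beta^{-1}=\Exp{-\Omega}$, and the candidate is $b=\beta\,\RotaOp(\beta^{-1}a_0)$. The crucial input is Spitzer's identity \eqref{eq:Spitzer}, which says exactly that $\beta$ solves the homogeneous equation, i.e.\ $\RotaOp(a_1\beta)=\beta-1$. Setting $x=a_1\beta$ and $y=\beta^{-1}a_0$ and applying the Rota-Baxter relation \eqref{eq:RotaBaxter} in the rearranged form $\RotaOp(x\RotaOp(y))=\RotaOp(x)\RotaOp(y)-\RotaOp(\RotaOp(x)y)-\lambda\RotaOp(xy)$ to $\RotaOp(a_1b)=\RotaOp(x\RotaOp(y))$, I would substitute $\RotaOp(a_1\beta)=\beta-1$ and use $\beta\beta^{-1}=1$ together with commutativity to collapse each term; the computation should reduce to $\RotaOp(a_1b)=b-\RotaOp((1+\lambda a_1)a_0)$, which is exactly \eqref{eq:LinearInhomogenEq}.

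For part (ii) the same scheme applies verbatim with $\lambda=0$: here $\lambda^{-1}\log(1+\lambda a_1)$ degenerates to $a_1$, the weight-zero form of Spitzer's identity gives $\beta=\Exp{\RotaOp(a_1)}=1+\RotaOp(a_1\beta)$ with no zero-divisor hypothesis, and the $\lambda\RotaOp(a_1a_0)$ term drops out, yielding \eqref{eq:LinearInhomogenEqSolutionZero} and \eqref{eq:TheEqualityZero}. The only genuinely delicate point I anticipate is the role of the zero-divisor hypothesis in (i): it is needed not in the substitution computation, which never divides by $\lambda$, but in order to invoke Spitzer's identity \eqref{eq:Spitzer} at all, since that is where $\beta=\Exp{\Omega}$ is identified with the solution of the homogeneous equation.
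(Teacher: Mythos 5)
Your proposal is correct, but it reaches the result by a genuinely different route from the paper's proof. Your key move is to take the classical Spitzer identity \eqref{eq:Spitzer} as a black box --- which is legitimate and non-circular, since the paper states it in Section \ref{sec:Result} as known prior background and the theorem being proved does not subsume it --- so that $\beta=\Exp{\RotaOp\left(\lambda^{-1}\log(1+\lambda a_1)\right)}$ satisfies $\RotaOp(a_1\beta)=\beta-1$; then one application of \eqref{eq:RotaBaxter} with $x=a_1\beta$, $y=\beta^{-1}a_0$ gives
$\RotaOp(a_1b)=(\beta-1)\RotaOp(y)-\RotaOp\left((1-\beta^{-1})a_0\right)-\lambda\RotaOp(a_1a_0)=b-\RotaOp((1+\lambda a_1)a_0)$,
which is \eqref{eq:LinearInhomogenEq}, and indeed the same three lines with $\lambda=0$ and the weight-zero Spitzer identity dispose of part (ii). The paper never invokes Spitzer's identity in its proof: for (i) it substitutes $u=\lambda^{-1}\log(1+\lambda a_1)$, applies \eqref{eq:RotaBaxter} to the difference of exponentials $\Exp{-\RotaOpT(u)}-\Exp{\RotaOp(u)}$, and evaluates $\RotaOp\left(\Exp{-\RotaOpT(u)}-\Exp{\RotaOp(u)}\right)=\lambda\left(\Exp{\RotaOp(u)}-1\right)$ via Kingman's formula \eqref{eq:Kingman}; for (ii) it proves the combinatorial Lemma \ref{lem:Iteration} and matches, by induction on $k$, the $k$-th term of the iterated series against the expansion of the closed form. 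So the two arguments rest on cited external inputs of comparable strength (Kingman's formula for the paper, Spitzer's identity for you). Your route buys brevity and uniformity --- parts (i) and (ii) become a single computation, and it isolates the zero-divisor hypothesis exactly where you say, inside the appeal to Spitzer --- whereas the paper's route is independent of Spitzer's identity (in effect it re-derives the mechanism behind it) and produces Lemma \ref{lem:Iteration}, which the paper reuses later in proving Proposition \ref{prop:EulerianTwo}. Your contraction-mapping packaging of existence and uniqueness, and the identification of the unique solution with the iterated series to obtain \eqref{eq:TheEquality} and \eqref{eq:TheEqualityZero}, coincide in substance with the paper's filtration-induction argument.
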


As an application of Theorem \ref{thm:MainCommutative} we show an Eulerian identity.
\begin{proposition}
\label{prop:Eulerian}
 For $1\neq q\in\QQ$ the following equality holds true
 \begin{align*}
 1 + \sum_{n=1}^\infty \frac{q^{2n-1}t^{n}}{(1 -q)\cdots(1-q^{n})} = (1-t)\prod_{n=1}^\infty \frac 1 {1-q^nt}.
\end{align*}
\end{proposition}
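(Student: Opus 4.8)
The plan is to apply Theorem~\ref{thm:MainCommutative}(i) in the commutative complete filtered Rota-Baxter algebra $(\QQ[[t]],\QQ[[t]]_n,\QInt)$ of weight $\lambda=1$, where $\QQ[[t]]_n=t^n\QQ[[t]]$ is the filtration by order of vanishing and $\QInt$ is the $q$-integral of section~\ref{sec:Eulerian}. The key structural feature I would exploit is that $\QInt$ acts diagonally on monomials, $\QInt(t^m)=c_m t^m$ for explicit scalars $c_m\in\QQ$ (the weight-$1$ condition forces $c_m$ to have the shape $\tfrac{-1}{1-q^m}$). Since $\lambda=1$ is a unit, hence not a zero divisor of $\QQ[[t]]$, part~(i) applies: the equation~\eqref{eq:LinearInhomogenEq} has a unique solution $b$ that is simultaneously the iterated $q$-integral series~\eqref{eq:TheEquality} and the closed form~\eqref{eq:LinearInhomogenEqSolution}. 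I would choose $a_0,a_1\in\QQ[[t]]_1$ so that, after restoring the constant term $1$, these two expressions for $b$ become the two sides of the claimed identity. The natural choice is $a_1$ proportional to $t$ together with $a_0$ proportional to $t/(1+a_1)$, a genuine rational element of $\QQ[[t]]_1$; the point of dividing by $(1+a_1)$ is that then $(1+a_1)a_0$ is a pure multiple of $t$, which makes the innermost iterate a single monomial and the whole nested expression telescope cleanly. The remaining constants are fixed by matching the coefficient of $t$.

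For the left-hand side I would evaluate the series~\eqref{eq:TheEquality} directly. Writing $u_0=\QInt((1+a_1)a_0)$ and peeling off the $n$ outer factors $\QInt(a_1\,\cdot\,)$ one at a time, the diagonal action of $\QInt$ multiplies the current monomial $t^m$ by $c_m$ while each factor of $a_1$ raises the degree by one. With the choices above this collapses to $\sum_{m\ge1}\delta\,\alpha^{m-1}(c_1\cdots c_m)\,t^m$ for suitable constants $\alpha,\delta$, and inserting $c_m=\tfrac{-1}{1-q^m}$ gives the general coefficient $\tfrac{q^{2m-1}}{(1-q)\cdots(1-q^m)}$. Summing and adding the constant term reproduces $1+\sum_{n\ge1}\tfrac{q^{2n-1}t^n}{(1-q)\cdots(1-q^n)}$. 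This side is routine precisely because the eigenvalue structure of $\QInt$ turns the iterated integrals into a telescoping product.

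For the right-hand side I would evaluate the closed form~\eqref{eq:LinearInhomogenEqSolution}. The factor $\exp(\QInt(\log(1+a_1)))$ is itself the unique solution of the homogeneous equation $c=1+\QInt(a_1 c)$, so by Spitzer's identity~\eqref{eq:Spitzer} its coefficients telescope in the same way to $\sum_{m\ge0}\tfrac{(-\alpha)^m}{(1-q)\cdots(1-q^m)}\,t^m$; by Euler's classical $q$-exponential identity $\sum_{m\ge0}\tfrac{x^m}{(1-q)\cdots(1-q^m)}=\prod_{m\ge0}\tfrac{1}{1-q^m x}$ this equals an explicit infinite product $\prod_{n}\tfrac{1}{1-q^n t}$. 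I would then multiply by the remaining factor $\QInt(\exp(-\QInt(\log(1+a_1)))\,a_0)$, whose product expansion cancels all but finitely many of these factors (the denominator of $a_0$ cancelling one product factor) and supplies the elementary prefactor $1-t$, thereby recognizing the whole closed form as $(1-t)\prod_{n\ge1}\tfrac{1}{1-q^n t}$.

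Finally, Theorem~\ref{thm:MainCommutative}(i) asserts that these two expressions for the same solution $b$ coincide, which is exactly the asserted Eulerian identity. I expect the main obstacle to be the closed-form side rather than the series side: one must evaluate the exponential of a $q$-integral of a logarithm, track the eigenvalues $c_m$ of $\QInt$ faithfully through both the $\exp$ and the outermost $\QInt$, and then \emph{recognize} the resulting power series as an infinite product. It is here that Euler's identity is the essential input, and the delicate bookkeeping is matching the two product factors, including the cancellations responsible for the prefactor $1-t$; by contrast the diagonal telescoping on the series side is elementary.
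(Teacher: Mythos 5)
Your plan breaks at its very first step: the identification of the $q$-integral. In the paper's algebra $\QInt$ acts diagonally with $\QInt(t^m)=\frac{q^m}{1-q^m}\,t^m$ (stated explicitly at the start of section \ref{sec:Eulerian}); the shape $\frac{-1}{1-q^m}$ that you claim is ``forced'' by the weight-one condition is not forced at all --- both families satisfy the diagonal relation $c_nc_m=(c_n+c_m+1)c_{n+m}$ --- and it is in fact the eigenvalue family of the \emph{other} weight-one operator $\RotaOpT=-\Id-\QInt$. This matters decisively. With the true eigenvalues, nesting $\QInt$ on monomials produces $c_1\cdots c_m=\frac{q^{m(m+1)/2}}{(1-q)\cdots(1-q^m)}$, a triangular exponent, so your telescoped series has coefficients $\delta\alpha^{m-1}q^{m(m+1)/2}/((1-q)\cdots(1-q^m))$, and no constants $\alpha,\delta$ can convert $q^{m(m+1)/2}$ into $q^{2m-1}$ (you would need $\delta\alpha^{m-1}=q^{2m-1-m(m+1)/2}$ with an exponent quadratic in $m$). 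The same slip infects your closed form: Spitzer's identity for the true $\QInt$ with $a_1=\alpha t$ gives $\sum_m \alpha^m q^{m(m+1)/2}t^m/((1-q)\cdots(1-q^m))$, which by the \emph{triangular} Euler identity is a product of factors $(1+\alpha q^k t)$, not the product $\prod_n(1-q^nt)^{-1}$ you want. The paper's proof is essentially the mirror image of yours: it takes $a_0=a_1=t$, lets the triangular exponents arise on the series side and converts them into $\prod_{n\ge1}(1+q^nt)$ via \eqref{eq:EuerianIdentity}, while the $q^{2n-1}$-type coefficients are produced on the closed-form side; and precisely where you say product factors ``cancel,'' the paper needs a dedicated lemma, proved in a second Rota--Baxter algebra $(\QQ[[t]],\QIntTwo)$ of weight $-1$, because a Rota--Baxter operator is not multiplicative: once the outer $\QInt$ is applied you cannot cancel factors inside it; you must expand into a power series and resum.

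There is also an obstruction that no choice of operator, constants, or bookkeeping can evade: the stated identity is false. Comparing coefficients of $t$, the left side gives $\frac{q}{1-q}$, whereas $(1-t)\prod_{n\ge1}(1-q^nt)^{-1}$ gives $\frac{q}{1-q}-1$. What this circle of ideas actually proves is Euler's identity at $x=q^2t$, namely
\begin{align*}
1+\sum_{n=1}^\infty\frac{q^{2n}t^n}{(1-q)\cdots(1-q^n)}=(1-qt)\prod_{n=1}^\infty\frac{1}{1-q^nt},
\end{align*}
and the paper's own proof goes astray for a reason cognate to your eigenvalue error: formula \eqref{eq:EuerianIdentity} misstates the classical Eulerian identity (the exponent must be $\frac{n(n+1)}{2}$, not $\frac{n(n+1)}{2}-1$, as one checks from \eqref{eq:EulerianSecond} together with Spitzer's identity \eqref{eq:Spitzer}), an error which propagates through \eqref{eq:EulerianFirst} into the statement of Proposition \ref{prop:Eulerian}. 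So your proposal has two independent fatal gaps --- the wrong eigenvalues and the illegitimate cancellation through $\QInt$ --- and even after repairing both, the method can only reach the corrected $(1-qt)$, $q^{2n}$ identity displayed above, never the equality as stated.
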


For a non-commutative algebra the theorem must be modified. Following \cite{ebrahimi2004spitzer, ebrahimi2005integrable, ebrahimi2006birkhoff,guo2012introduction} in a non-commutative complete filtered Rota-Baxter algebra of weight $\lambda$ ($\lambda$ is not a zero divisor of $\Algebra$) we introduce the $\BCH$-recursion operator $\CHL : \Algebra_1 \to \Algebra_1$ which is defined as the unique solution of the algebraic equation
\begin{align}
\label{eq:CHL}
 \CHL(a) = a + \lambda^{-1}\BCH\Bra{\RotaOp\Bra{\CHL(a)},\RotaOpT\Bra{\CHL(a)}}
\end{align}
for $a\in\Algebra_1$. Here $\BCH : \Algebra_1 \times \Algebra_1 \to \Algebra_1$ is the celebrated Baker-Campbell-Hausdorff power series in a non-commutative algebra given as the unique solution of 
\begin{align*}
 \exp(x)\exp(y) = \exp\Bra{x+ y + \BCH(x,y)},
\end{align*}
for all $x,y\in\Algebra_1$. The operator $\CHL$ is introduced so that the formula
\begin{align}
 \label{eq:BCH-CHL}
 \exp(-\lambda a) = \exp (\RotaOp\Bra{\CHL(a)})\exp\Bra{\RotaOpT\Bra{\CHL(a)}}
\end{align}
be fulfilled for all $a\in\Algebra_1$. Having $\CHL$ we can state the analogue of Theorem \ref{thm:MainCommutative} in the non-commutative algebra.

\begin{theorem}
\label{thm:MainNonCommutative}
 Let $(\Algebra,\Algebra_n,\RotaOp)$ be a (non-commutative) complete filtered Rota-Baxter algebra of weight $\lambda$ which is not a zero divisor of $\Algebra$, and $a_0,a_1 \in\Algebra_1$ are fixed. Then 
 \begin{enumerate}[(i)]
  \item  the equation
\begin{align}
 \label{eq:LinearInhomogenEqNonCom}
b = \RotaOp((1 + \lambda a_1)a_0) + \RotaOp(a_1b)
\end{align}
has the unique solution
\begin{align}
 \label{eq:LinearInhomogenEqSolutionNonCom}
 b = \exp \left(\RotaOp\left(\CHL\Bra{\lambda^{-1}\log(1+\lambda a_1)} \right) \right) 
 \RotaOp\left(\exp \left(-\RotaOp\left(\CHL\Bra{\lambda^{-1}\log(1+\lambda a_1)} \right) \right)a_0\right),
\end{align}
and moreover the equality \eqref{eq:TheEqualityNonCom} is satisfied;
 \item the equation
\begin{align*}
b = \RotaOpT(a_0(1 + \lambda a_1)) - \RotaOpT(ba_1)
\end{align*}
has the unique solution
\begin{align*}
 b = 
 \RotaOp\left(a_0\exp \left(-\RotaOp\left(\CHL\Bra{\lambda^{-1}\log(1+\lambda a_1)} \right) \right)\right)
 \exp \left(\RotaOp\left(\CHL\Bra{\lambda^{-1}\log(1+\lambda a_1)} \right) \right) .
\end{align*}
\end{enumerate}

\end{theorem}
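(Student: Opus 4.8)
The plan is to reduce everything to two ingredients: the homogeneous non-commutative Spitzer identity, and one short computation that uses nothing but the Rota-Baxter relation \eqref{eq:RotaBaxter}. First I would isolate a fact valid in \emph{any} complete filtered Rota-Baxter algebra of weight $\lambda$, commutative or not: if $c$ is invertible and solves the homogeneous equation $c = 1 + \RotaOp(a_1 c)$, then $b = c\,\RotaOp\Bra{c^{-1}a_0}$ solves \eqref{eq:LinearInhomogenEqNonCom}. Indeed, writing $w=\RotaOp\Bra{c^{-1}a_0}$ and using $\RotaOp(a_1 c)=c-1$, the relation \eqref{eq:RotaBaxter} applied to $(c-1)w=\RotaOp(a_1 c)\,\RotaOp\Bra{c^{-1}a_0}$ gives $(c-1)w=\RotaOp(a_1 b)+\RotaOp\Bra{(1-c^{-1})a_0}+\lambda\RotaOp(a_1 a_0)$. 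Since $\RotaOp\Bra{(1-c^{-1})a_0}=\RotaOp(a_0)-w$ and $\RotaOp(a_0)+\lambda\RotaOp(a_1 a_0)=\RotaOp\Bra{(1+\lambda a_1)a_0}$, while the left-hand side is $b-w$, the terms $w$ cancel and one is left with exactly $b=\RotaOp\Bra{(1+\lambda a_1)a_0}+\RotaOp(a_1 b)$. No commutativity is used, so this same computation simultaneously recovers Theorem \ref{thm:MainCommutative}.

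It then remains to produce the invertible homogeneous factor. This is precisely the non-commutative Spitzer identity: by construction of the $\BCH$-recursion $\CHL$ through \eqref{eq:CHL}--\eqref{eq:BCH-CHL} (introduced for this very purpose in \cite{ebrahimi2004spitzer}), the element $c=\Exp{\RotaOp(\chi)}$ with $\chi=\CHL\Bra{\lambda^{-1}\log(1+\lambda a_1)}$ satisfies $c = 1 + \RotaOp(a_1 c)$, and since $\RotaOp(\chi)\in\Algebra_1$ it is invertible with $c^{-1}=\Exp{-\RotaOp(\chi)}$. Substituting into the lemma yields exactly \eqref{eq:LinearInhomogenEqSolutionNonCom}. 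For uniqueness I would note that the affine map $b\mapsto\RotaOp\Bra{(1+\lambda a_1)a_0}+\RotaOp(a_1 b)$ is a contraction for the metric $d$: as $a_1\in\Algebra_1$ and $\RotaOp$ preserves the filtration, differences of iterates drop from $\Algebra_n$ into $\Algebra_{n+1}$, so completeness forces a unique fixed point. Iterating the equation from $0$ (Picard iteration) produces the series on the left of \eqref{eq:TheEqualityNonCom}, which therefore equals the closed form; this is the asserted identity \eqref{eq:TheEqualityNonCom}.

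Part (ii) is the right-handed companion, and I would treat it by a parallel argument built on the conjugate operator $\RotaOpT=-\lambda\,\Id-\RotaOp$, which is again a Rota-Baxter operator of weight $\lambda$ and is tied to $\RotaOp$ through \eqref{eq:BCH-CHL}: with the \emph{same} $\chi$ the companion factor $d=\Exp{\RotaOpT(\chi)}$ obeys $cd=\Exp{-\lambda\,\Bra{\lambda^{-1}\log(1+\lambda a_1)}}=(1+\lambda a_1)^{-1}$. The device that turns the $+\RotaOp$ of the homogeneous equation into the $-\RotaOpT$ appearing in (ii) is the bridge identity $(1+\lambda a_1)c = 1-\RotaOpT(a_1 c)$, which follows immediately from $c=1+\RotaOp(a_1 c)$ and $\RotaOpT=-\lambda\,\Id-\RotaOp$. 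Running the mirror of the first computation (opposite-side multiplication, $\RotaOp\leftrightarrow\RotaOpT$) and using this bridge should deliver the stated solution $b=\RotaOp\Bra{a_0 c^{-1}}c$, with uniqueness again from contractivity.

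The genuinely routine step is the first-paragraph computation. The two places demanding care are, first, importing the homogeneous identity $c=1+\RotaOp(a_1 c)$ for $c=\Exp{\RotaOp(\chi)}$ --- this is where the entire weight of the recursion \eqref{eq:CHL} and the factorization \eqref{eq:BCH-CHL} is spent, and one must keep the abbreviation $\lambda^{-1}\log(1+\lambda a_1)$ consistent so as never to assume $\lambda$ invertible in $\Ring$ --- and second, the sign-and-side bookkeeping of part (ii). I expect the latter to be the main obstacle: making the $-\RotaOpT(ba_1)$ term emerge correctly while reconciling the opposite multiplication and the $\RotaOp\leftrightarrow\RotaOpT$ swap so as to land back on the $\RotaOp$-form $b=\RotaOp\Bra{a_0 c^{-1}}c$ is delicate, and it is exactly the identity $(1+\lambda a_1)c=1-\RotaOpT(a_1 c)$ that resolves it.
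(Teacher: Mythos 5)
Your treatment of part (i) is correct, but it takes a genuinely different route from the paper's. The paper fixes $u=\lambda^{-1}\log(1+\lambda a_1)$ and verifies \emph{directly} that \eqref{eq:LinearInhomogenEqSolutionNonCom} satisfies \eqref{eq:LinearInhomogenEqNonCom}, repeating the commutative computation with Kingman's formula \eqref{eq:Kingman} and the two consequences \eqref{eq:BCH-CHLBis}--\eqref{eq:BCH-CHLTercio} of the factorization \eqref{eq:BCH-CHL}; it never invokes the homogeneous solution as such. You instead split the problem into (a) the homogeneous non-commutative Spitzer identity of Ebrahimi-Fard--Guo--Kreimer, imported as a black box, which supplies the invertible $c=\Exp{\RotaOp(\chi)}$, $\chi:=\CHL\Bra{\lambda^{-1}\log(1+\lambda a_1)}$, with $c=1+\RotaOp(a_1c)$, and (b) a one-step computation from \eqref{eq:RotaBaxter} with $x=a_1c$, $y=c^{-1}a_0$, showing that $b=c\,\RotaOpBra{c^{-1}a_0}$ solves \eqref{eq:LinearInhomogenEqNonCom}. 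I have checked (b): it is correct, uses no commutativity, and re-proves Theorem \ref{thm:MainCommutative}(i) as a special case. Your uniqueness argument and the passage to \eqref{eq:TheEqualityNonCom} (iteration plus uniqueness, convergence from the filtration) coincide with the paper's. Your route buys brevity and a uniform treatment of the commutative and non-commutative cases; the cost is that the hard content is outsourced to the cited EGK theorem (note it is a theorem, not literally ``by construction'' of \eqref{eq:CHL}--\eqref{eq:BCH-CHL}), whereas the paper stays self-contained modulo \eqref{eq:Kingman} and the defining property \eqref{eq:BCH-CHL}.

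Part (ii) is where you have a genuine gap, and it is not one of bookkeeping: the mirror computation you outline cannot ``land back on the $\RotaOp$-form $b=\RotaOpBra{a_0c^{-1}}c$'', because part (ii) \emph{as printed} is false, as the case $a_1=0$ already shows. There $\CHL(0)=0$, so the displayed solution is $\RotaOpBra{a_0}$, while the equation collapses to $b=\RotaOpTBra{a_0}=-\lambda a_0-\RotaOpBra{a_0}$; in $(\QQ[[t]],\QInt)$ of weight $1$ with $a_0=t$ these are $qt/(1-q)$ and $-t/(1-q)$, which differ for $q\neq -1$. Your own strategy, executed carefully, produces a corrected statement instead: the opposite algebra equipped with the conjugate operator $\RotaOpT$ is again a complete filtered Rota-Baxter algebra of weight $\lambda$, and its $\BCH$-recursion is again $\CHL$ (reversing products swaps the two arguments of $\BCH$, and exchanging $\RotaOp\leftrightarrow\RotaOpT$ swaps them back, cf.\ \eqref{eq:CHL}); applying part (i) there shows that the equation
\begin{align*}
 b = \RotaOpT(a_0(1 + \lambda a_1)) + \RotaOpT(ba_1)
\end{align*}
has the unique solution
\begin{align*}
 b = \RotaOpT\Bra{a_0\Exp{-\RotaOpT(\chi)}}\Exp{\RotaOpT(\chi)},
\end{align*}
with a plus sign in the equation and $\RotaOpT$, not $\RotaOp$, in the solution. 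So your plan can prove this corrected assertion, but not the stated one; the closing claim of your third paragraph is unsubstantiated and in fact unattainable. The paper offers no help here --- its entire proof of (ii) is ``quite the same and we omit it'' --- and the printed statement appears to carry typos. Your bridge identity $(1+\lambda a_1)c=1-\RotaOpT(a_1c)$ and the factorization $cd=(1+\lambda a_1)^{-1}$ are both correct, but neither rescues the printed formula; a complete write-up must either prove the corrected statement and flag the discrepancy, or substantiate the printed one, which I do not believe is possible.
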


If $\lambda = 0$ the theorem must be additionally modified. As was pointed out in \cite{ebrahimi2006birkhoff, carinena2007hopf, ebrahimi2009magnus} the $\BCH$-recursion $\CHL$ reduces for $\lambda \to 0$ to the Magnus recursion \cite{magnus1954exponential}. Namely, for a non-commutative complete filtered Rota-Baxter algebra of weight $0$ the zero $\BCH$-recursion operator $\CHZ : \Algebra_1 \to \Algebra_1$  is given by the unique solution of the algebraic equation
\begin{align}
\label{eq:CHZ}
 \CHZ(a) &=  \frac{\ad{\RotaOpBra{\CHZ(a)}}} {\exp(\ad{\RotaOpBra{\CHZ(a)}}) - 1} (a) \\
\nonumber &= \Bra{ 1 + \sum_{k=1}^\infty \frac {B_k} {k!} \Bra{\ad{\RotaOpBra{\CHZ(a)}}}^k } (a) ,
\end{align}
for each $a\in\Algebra_1$. Here, $\ad{x}(y) = [x,y] = xy - yx$ is the linear addjoint operator, and $B_k$, $k\in\NN$, are the 
Bernoulli numbers. Then the solution of the equation \eqref{eq:LinearHomogenEq} is $b = \exp(\RotaOpBra{\CHZ(a)})$ \cite{ebrahimi2006birkhoff}, so that 
$\exp\left(\RotaOp\left(\CHL\Bra{\lambda^{-1}\log(1+\lambda a_1)}\right) \right)$ reduces to $\exp(\RotaOpBra{\CHZ(a)})$ in the 
limit $\lambda \to 0$. Therefore we can state the following corollary from Theorem \ref{thm:MainNonCommutative}.

\begin{corollary}
\label{cor:MainNonCommutativeZero}
 Let $(\Algebra,\Algebra_n,\RotaOp)$ be a (non-commutative) complete filtered Rota-Baxter algebra of weight $0$, and $a_0,a_1 \in\Algebra_1$ are fixed. Then   the equation
\begin{align*}
b = \RotaOp(a_0) + \RotaOp(a_1b)
\end{align*}
has the unique solution
\begin{align}
\label{eq:LinearInhomogenEqSolutionNonComZero}
 b =  \exp(\RotaOpBra{\CHZ(a_1)})
 \RotaOp\left(\exp(-\RotaOpBra{\CHZ(a_1)})a_0\right),
\end{align}
and moreover the equality \eqref{eq:TheEqualityNonComZero} is satisfied.
\end{corollary}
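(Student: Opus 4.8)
The plan is to mirror the proof of part~(i) of Theorem~\ref{thm:MainNonCommutative}, but with the Magnus recursion $\CHZ$ in place of the $\BCH$-recursion $\CHL$ and with the weight-$0$ form of \eqref{eq:RotaBaxter} throughout. One cannot simply set $\lambda=0$ in Theorem~\ref{thm:MainNonCommutative}: the standing hypothesis that $\lambda$ be a non-zero-divisor fails, the factor $\lambda^{-1}$ in \eqref{eq:CHL} is meaningless, and the defining relation \eqref{eq:BCH-CHL} degenerates to the tautology $1=1$ (since $\RotaOpT=-\RotaOp$ when $\lambda=0$). So the statement is genuinely separate and must be argued directly, even though it is the formal $\lambda\to0$ limit of part~(i), obtained there through $\lambda^{-1}\log(1+\lambda a_1)\to a_1$ and $\CHL\to\CHZ$.

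First I would dispose of existence, uniqueness, and the series expansion. The operator $\Phi(b)=\RotaOp(a_0)+\RotaOp(a_1 b)$ maps $\Algebra$ into itself, and since $a_1\in\Algebra_1$ we have $a_1(b-b')\in\Algebra_{n+1}$ whenever $b-b'\in\Algebra_n$; as $\RotaOp$ preserves the filtration this gives $\Phi(b)-\Phi(b')=\RotaOp\Bra{a_1(b-b')}\in\Algebra_{n+1}$, so $\Phi$ is a contraction of ratio $\tfrac12$ on the complete metric space $(\Algebra,d)$. By the Banach fixed-point theorem there is a unique solution, and iterating $\Phi$ from $0$ produces the convergent series $\sum_{n\ge0}\underbrace{\RotaOp(a_1\cdots\RotaOp(a_1}_{n}\RotaOp(a_0))\cdots)$, which is the left-hand side of \eqref{eq:TheEqualityNonComZero}. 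Uniqueness means it now suffices to check that the closed form \eqref{eq:LinearInhomogenEqSolutionNonComZero} is a fixed point of $\Phi$.

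For that verification I would invoke the weight-$0$ Spitzer identity recalled in the excerpt \cite{ebrahimi2006birkhoff}: the element $u:=\Exp{\RotaOpBra{\CHZ(a_1)}}$ solves the homogeneous equation \eqref{eq:LinearHomogenEq}, i.e. $\RotaOp(a_1 u)=u-1$. Writing the candidate as $b=u\,\RotaOp\Bra{u^{-1}a_0}$ with $u^{-1}=\Exp{-\RotaOpBra{\CHZ(a_1)}}$, I would expand $\RotaOp(a_1 b)=\RotaOp\Bra{(a_1u)\,\RotaOp(u^{-1}a_0)}$ by applying \eqref{eq:RotaBaxter} at $\lambda=0$ with $x=a_1u$ and $y=u^{-1}a_0$. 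Substituting $\RotaOp(a_1u)=u-1$ and using $u\cdot u^{-1}a_0=a_0$ then collapses the right-hand side to $\RotaOp(a_1 b)=u\,\RotaOp(u^{-1}a_0)-\RotaOp(a_0)=b-\RotaOp(a_0)$, which is precisely the equation $b=\RotaOp(a_0)+\RotaOp(a_1 b)$. Together with uniqueness this identifies $b$ with the series and yields \eqref{eq:TheEqualityNonComZero}.

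The main obstacle is conceptual rather than computational: it is the passage to weight $0$, which forbids the literal limit and forces one to supply the homogeneous solution $u=\Exp{\RotaOpBra{\CHZ(a_1)}}$ from the separate weight-$0$ theory. Once $u$ and the identity $\RotaOp(a_1u)=u-1$ are in hand, the only care needed in the non-commutative setting is to preserve the order of the factors $u$, $a_1$, $a_0$ and to apply \eqref{eq:RotaBaxter} with exactly the right $x$ and $y$; the algebra then closes up with no further input.
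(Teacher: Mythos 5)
Your proof is correct, but it takes a genuinely different route from the paper. The paper does not verify the closed form \eqref{eq:LinearInhomogenEqSolutionNonComZero} directly: it presents the corollary as the formal $\lambda\to 0$ limit of Theorem \ref{thm:MainNonCommutative}, citing the literature for the reduction of the $\BCH$-recursion $\CHL$ to the Magnus recursion $\CHZ$ and for the fact that $\Exp{\RotaOpBra{\CHZ(a)}}$ solves the homogeneous equation \eqref{eq:LinearHomogenEq}, and then corroborates the statement by comparing it with the classical variation-of-constants formula for $\dot B = A_0 + A_1 B$ in the operator algebra with $\RotaInt$. You instead argue exactly as you say one must: you note that setting $\lambda=0$ literally is illegitimate (the non-zero-divisor hypothesis fails, $\lambda^{-1}$ is undefined, and \eqref{eq:BCH-CHL} trivializes because $\RotaOpT=-\RotaOp$), and you give a self-contained verification --- uniqueness by the same filtration/contraction argument the paper uses in Theorem \ref{thm:MainCommutative}, followed by a one-step computation: writing $u=\Exp{\RotaOpBra{\CHZ(a_1)}}$, $b=u\,\RotaOpBra{u^{-1}a_0}$, the weight-$0$ identity $\RotaOp(x\RotaOp(y))=\RotaOp(x)\RotaOp(y)-\RotaOp(\RotaOp(x)y)$ with $x=a_1u$, $y=u^{-1}a_0$, together with $\RotaOp(a_1u)=u-1$, collapses to $\RotaOp(a_1b)=b-\RotaOp(a_0)$. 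Your argument is more rigorous than the paper's, since in an abstract ring $\Ring$ the limit $\lambda\to0$ has no literal meaning and the paper's derivation is essentially heuristic; it is also leaner than the paper's own proof technique for Theorem \ref{thm:MainNonCommutative}, because exploiting the fixed-point property of $u$ avoids all the exponential manipulations via \eqref{eq:BCH-CHLBis}, \eqref{eq:BCH-CHLTercio} and the analog of \eqref{eq:Kingman}. What the paper's route buys in exchange is the conceptual picture: it exhibits the corollary as continuous with the weight-$\lambda$ theory and with Magnus's classical ODE result, at the cost of resting on the cited fact that $\CHL$ degenerates to $\CHZ$. Both arguments ultimately import the same external ingredient from \cite{ebrahimi2006birkhoff}, namely that $\Exp{\RotaOpBra{\CHZ(a_1)}}$ solves \eqref{eq:LinearHomogenEq}; neither proves it from scratch.
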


In the context of the algebra of linear operators, considered by Wilhelm Magnus in \cite{magnus1954exponential}, this corollary is quite clear. Indeed, knowing (by Magnus) that the solution of a homogeneous linear equation
\begin{align*}
 \dot Y(t) = A_1(t)Y(t), \qquad Y(0) = \Id,
\end{align*}
is $Y(t) = \exp(\Omega(t))$, where $\Omega(t)$ satisfies $\dot \Omega(t) =  \frac{\ad{{\Omega(t)}}} {\exp(\ad{{\Omega(t)}}) - 1} (A_1)$, $\Omega(0) = 0$, it is known that for an inhomogeneous equation
\begin{align}
\label{eq:LinearInhomogenDiff}
 \dot B(t) = A_0(t) + A_1(t)B(t), \qquad B(0) = 0,
\end{align}
the solution is 
\begin{align}
\label{eq:LinearInhomogenDiffSolution}
B(t) = \exp(\Omega(t))\int_0^t \exp(-\Omega(s))A_0(s)\, ds.
\end{align}
Now $\RotaInt$ defined as in \eqref{eq:RotaInt} is also a Rota-Baxter operator in the space of linear operators, and one can conclude, as observed in \cite{ebrahimi2006birkhoff}, that $\Omega(t) = \RotaInt(\CHZ( A_0))$. Therefore, we see that \eqref{eq:LinearInhomogenDiffSolution} is equivalent to \eqref{eq:LinearInhomogenEqSolutionNonComZero}. Integrating \eqref{eq:LinearInhomogenDiff} (actually using $\RotaInt$) we have Corollary \ref{cor:MainNonCommutativeZero} for $\RotaOp = \RotaInt$.

Before we end this section let us look at a certain interesting case of the equation \eqref{eq:LinearInhomogenEq}. Namely, assume $(1 + \lambda a_1)a_0 = - a_1$, so that $a_0 = -(1 + \lambda a_1)^{-1}a_1$, where we abbreviate
\begin{align*}
 (1 + \lambda a_1)^{-1} = \sum_{n=0}^\infty (-\lambda a_1)^n.
\end{align*}
In this case formula \eqref{eq:TheEquality} reads as
\begin{align*}
-\Exp {\RotaOp\left(\lambda^{-1}\log(1+\lambda a_1) \right) } 
 \RotaOp\left(\Exp {-\RotaOp\left(\lambda^{-1}\log(1+\lambda a_1) \right) }(1 + \lambda a_1)^{-1}a_1\right)
   = 1 - \sum_{n=0}^\infty \underbrace{\RotaOp(a_1\cdots\RotaOp(a_1}_{n})\cdots).
\end{align*}
By  Spitzer's identity the sum on the right side 
is equal to  
$\exp (\RotaOp\left(\lambda^{-1}\log(1+\lambda a_1) \right) )$. Using this fact we can transform this equality into the following form
\begin{align}
\label{eq:SpecialEquality}
 {1 - \RotaOp\left(\Exp {-\RotaOp\left(\lambda^{-1}\log(1+\lambda a_1) \right) }(1 + \lambda a_1)^{-1}a_1\right)} = \Exp {-\RotaOp\left(\lambda^{-1}\log(1+\lambda a_1) \right) }.
\end{align}
This implies that $\Exp {-\RotaOp\left(\lambda^{-1}\log(1+\lambda a_1) \right) }$ is a solution of an equation 
\begin{align}
\label{eq:EquationD}
d = 1 + \RotaOp(-(1 + \lambda a_1)^{-1}a_1 d).
\end{align}
But from  Spitzer's identity this equation has also a solution 
$\Exp {\RotaOp\left(\lambda^{-1}\log(1 -(1 + \lambda a_1)^{-1}a_1)\right)}$. Both formulas coincides since 
\begin{align*}
 -\log(1+\lambda a_1) = \log\Bra{\frac 1 {1+\lambda a_1}} = \log\Bra{1 -\frac {\lambda a_1}{1 + \lambda a_1}}.
\end{align*}
This reasoning explains roughly the occurrence of the factor $1+\lambda a_1$ in the first component of the right side of \eqref{eq:LinearInhomogenEq}. It is needed because the ``inverse dynamics'' to the ``dynamics'' generated by the homogeneous equation
\begin{align}
\label{eq:EquationC}
 c = 1 + \RotaOp(a_1 c)
\end{align}
is given by the ``dynamics'' generated by the equation \eqref{eq:EquationD}. Observe also that with the above remarks we can say that the solution of the equation \eqref{eq:LinearInhomogenEq} is $b = c\RotaOp(d a_0)$, where $c$ and $d$ are the unique solutions of the equations \eqref{eq:EquationC} and \eqref{eq:EquationD}, respectively.

It occurs that formula \eqref{eq:SpecialEquality} gives another Eulerian identity.
\begin{proposition}
\label{prop:EulerianTwo}
 For $1\neq q\in\QQ$ the following equality holds true
 \begin{align*}
 1 + \sum_{n=1}^\infty \frac{q^{n}t^{n}}{(1 -q)\cdots(1-q^{n})} = \prod_{n=1}^\infty \frac 1 {1-q^nt}.
\end{align*}
\end{proposition}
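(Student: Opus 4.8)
The plan is to specialize the general identity \eqref{eq:SpecialEquality} to the Rota-Baxter algebra of formal power series equipped with the $q$-integral. Concretely, I would work in $\Algebra=\QQ[[t]]$ with the filtration $\Algebra_n=t^n\QQ[[t]]$ and the weight-$1$ operator $\QInt$ acting on monomials by $\QInt(t^k)=\frac{t^k}{q^k-1}$ for $k\ge1$ (and $\QInt(1)=0$); a direct monomial computation shows this satisfies \eqref{eq:RotaBaxter} with $\lambda=1$, that $(\Algebra,\Algebra_n,\QInt)$ is a commutative complete filtered Rota-Baxter algebra, and that $\lambda=1$ is not a zero divisor, so Theorem \ref{thm:MainCommutative}(i) and formula \eqref{eq:SpecialEquality} are available. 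The structural fact I would isolate first is that on $\Algebra_1$ the operator $\QInt$ is inverted by the $q$-difference operator $\delta_q g(t):=g(qt)-g(t)$: since $\delta_q(t^k)=(q^k-1)t^k$ we have $\delta_q\QInt=\mathrm{id}$ on $\Algebra_1$, so for $h\in\Algebra_1$ the relation $\QInt(h)=g\in\Algebra_1$ is equivalent to $g(qt)-g(t)=h$.

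Next I would choose $a_1=\frac{qt}{1-qt}\in\Algebra_1$, for which $1+a_1=\frac{1}{1-qt}$ and hence $(1+\lambda a_1)^{-1}a_1=(1-qt)\frac{qt}{1-qt}=qt$. Writing $d:=\Exp{-\RotaOp(\lambda^{-1}\log(1+\lambda a_1))}$ with $\RotaOp=\QInt$, the identity \eqref{eq:SpecialEquality} collapses to $1-\QInt(qt\,d)=d$, that is $\QInt(qt\,d)=1-d$. Since $d\in1+\Algebra_1$, both $1-d$ and $qt\,d$ lie in $\Algebra_1$, so applying $\delta_q$ and using the inversion fact above converts this into the first-order $q$-difference equation
\begin{align*}
 d(qt)=(1-qt)\,d(t),\qquad d(0)=1.
\end{align*}

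Finally I would read off the two sides of the proposition from this single equation. Comparing coefficients of $t^k$ gives the recursion $d_k(1-q^k)=q\,d_{k-1}$ with $d_0=1$, whose unique solution (available because $q^k\neq1$) is $d_k=\frac{q^k}{(1-q)\cdots(1-q^k)}$, so that $d=1+\sum_{n\ge1}\frac{q^nt^n}{(1-q)\cdots(1-q^n)}$, the left-hand side of the proposition. On the other hand a telescoping computation shows that $F(t)=\prod_{n\ge1}\frac{1}{1-q^nt}$ satisfies $F(qt)=(1-qt)F(t)$ and $F(0)=1$; as the $q$-difference equation has a unique formal solution normalized at the origin, $d=F$, which is the right-hand side. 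Equating the two expressions for $d$ is exactly the claimed identity. The genuinely delicate points are bookkeeping: confirming the weight, filtration, and completeness of $(\QQ[[t]],\QInt)$, and ensuring the denominators $1-q^k$ never vanish (where $q\neq1$, more precisely $q^k\ne1$, is used). The algebraic core is simply that a series and an infinite product solve the same $q$-difference recursion.
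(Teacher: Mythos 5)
Your proof is correct in substance, but it takes a genuinely different route from the paper's. The paper stays in the algebra it already set up for Proposition \ref{prop:Eulerian}: it uses $\QInt(t^n)=\frac{q^nt^n}{1-q^n}$, specializes \eqref{eq:SpecialEquality} to $a_1=t$, reuses the product formula \eqref{eq:ComputationOne} for $\exp\Bra{-\QInt(\log(1+t))}$, and then needs an auxiliary lemma --- itself proved by a second application of Spitzer's identity \eqref{eq:Spitzer} in a second Rota-Baxter algebra of weight $-1$ (the operator $\QIntTwo(t^n)=\frac{t^n}{1-q^n}$) --- in order to evaluate $\QInt$ of an explicit series, finishing with the substitution $t\mapsto-t$. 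You instead take the operator $t^k\mapsto\frac{t^k}{q^k-1}$, choose $a_1=\frac{qt}{1-qt}$ so that $(1+a_1)^{-1}a_1=qt$ is a single monomial, and invert the operator by the $q$-shift $\delta_q g(t)=g(qt)-g(t)$, which collapses \eqref{eq:SpecialEquality} to the first-order $q$-difference equation $d(qt)=(1-qt)\,d(t)$, $d(0)=1$. Since this equation has a unique formal solution and both the Eulerian series and the infinite product solve it, the identity follows. What your route buys is self-containedness and transparency: no auxiliary lemma, no second Rota-Baxter algebra, no explicit evaluation of $\exp(-\RotaOp(\log(1+a_1)))$ as a product, no final sign substitution; the identity is exposed as ``two solutions of one $q$-difference recursion''. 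What the paper's route buys is shared infrastructure: the computations \eqref{eq:ComputationOne} and \eqref{eq:EulerianSecond} serve both Eulerian propositions at once.

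Two caveats. First, your claim that the operator satisfies \eqref{eq:RotaBaxter} outright is not accurate at the unit: with $P(1)=0$, taking $x=1$ in \eqref{eq:RotaBaxter} would force $0=P(P(y))+P(y)$, which already fails for $y=t$. The painless fix is to observe that your operator is exactly the paper's conjugate operator $\RotaOpT(x)=-x-\QInt(x)$ for the $q$-integral, since $-t^k-\frac{q^kt^k}{1-q^k}=\frac{t^k}{q^k-1}$; the paper itself remarks that $(\Algebra,\RotaOpT)$ is again Rota-Baxter of the same weight, and the proofs of Theorem \ref{thm:MainCommutative}, of Spitzer's identity, and hence of \eqref{eq:SpecialEquality} only ever invoke \eqref{eq:RotaBaxter} and \eqref{eq:Kingman} for arguments in $\Algebra_1$, where your monomial computation is valid. (The paper's own $\QInt$ has the same defect on constant terms, so this is a shared imprecision rather than a gap in your argument.) Second, as in the paper, the coefficients of $\prod_{n\ge1}\frac{1}{1-q^nt}$ are infinite numerical series, so the identity really requires $|q|<1$ to be read coefficient-wise; on the related point of vanishing denominators you are actually more careful than the paper, since your condition $q^k\neq 1$ correctly excludes $q=-1$, which the hypothesis $q\neq1$ alone does not.
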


\section{Proofs of Theorems}
\label{sec:Proof}

In this section we prove Theorem \ref{thm:MainCommutative} and Theorem \ref{thm:MainNonCommutative}. In what follows we use
formula (see \cite{kingman1962spitzer, guo2012introduction})
\begin{align}
 \label{eq:Kingman}
 \lambda\RotaOp(u)^n = \RotaOpBra{\left(-\RotaOpTBra{u}\right)^n - \Bra{\RotaOpBra{u}}^n }
\end{align}
fulfilled for every $u\in\Algebra$, where $(\Algebra,\RotaOp)$ is a Rota-Baxter algebra of weight $\lambda$ with the assumption that $\lambda$ is not a zero divisor of $\Algebra$.
On the other hand for $\lambda = 0$ we need the following lemma.

\begin{lemma}
\label{lem:Iteration}
Let $(\Algebra, \RotaOp)$ be a commutative Rota-Baxter algebra of weight $0$. Then for every $a \in \Algebra$ and $k\in\NN \cup \Set{0}$ we have
\begin{enumerate}[(i)]
 \item \label{item:A} $\underbrace{\RotaOp(a\cdots\RotaOp(a}_{k})\cdots) = \frac 1 {k!} \Bra{\RotaOp(a)}^k$; 
 \item \label{item:B} $\sum_{l=0}^k (-1)^l \underbrace{\RotaOp(a\cdots\RotaOp(a}_{k+1-l})\cdots)
 \underbrace{\RotaOp(a\cdots\RotaOp(a}_{l})\cdots) =
 (-1)^k\underbrace{\RotaOp(a\cdots\RotaOp(a}_{k+1})\cdots)$.
\end{enumerate}
\end{lemma}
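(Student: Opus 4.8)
The plan is to treat part (i) as the substantive step and to deduce part (ii) from it by a purely numerical (binomial) computation. Throughout, write $u = \RotaOp(a)$ and let $p_k = \underbrace{\RotaOp(a\cdots\RotaOp(a}_{k})\cdots)$ denote the $k$-fold nested expression, so that $p_0 = 1$ and $p_k = \RotaOp(a\, p_{k-1})$ for $k\geq 1$. With this notation part (i) reads $p_k = u^k/k!$, and part (ii) reads $\sum_{l=0}^k (-1)^l\, p_{k+1-l}\, p_l = (-1)^k p_{k+1}$.

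For part (i) the engine is the auxiliary identity
\[
\RotaOp(a\, u^n) = \frac{u^{n+1}}{n+1}, \qquad n\geq 0,
\]
which I would prove by induction on $n$. The base case $n=0$ is just $\RotaOp(a) = u$. For the inductive step I would apply the weight-$0$ Rota-Baxter relation $\RotaOp(x)\RotaOp(y) = \RotaOp(x\RotaOp(y)) + \RotaOp(\RotaOp(x)y)$ to $x = a$ and $y = a\, u^{n-1}$. Using the inductive hypothesis $\RotaOp(a\,u^{n-1}) = u^n/n$ to rewrite the first summand as $\tfrac{1}{n}\RotaOp(a\,u^n)$, and commutativity together with $u = \RotaOp(a)$ to collapse the second summand $\RotaOp(\RotaOp(a)\,a\,u^{n-1})$ into $\RotaOp(a\,u^n)$, one obtains $u\cdot(u^n/n) = \tfrac{n+1}{n}\RotaOp(a\,u^n)$, which rearranges to the claim at level $n$. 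Part (i) then follows by a second, outer induction on $k$: from $p_{k-1} = u^{k-1}/(k-1)!$ and the auxiliary identity, $p_k = \RotaOp(a\,p_{k-1}) = \tfrac{1}{(k-1)!}\RotaOp(a\,u^{k-1}) = u^k/k!$.

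For part (ii) I would simply substitute $p_j = u^j/j!$ from part (i) into both sides. Each side becomes a rational multiple of $u^{k+1}$, and after clearing the common factor $u^{k+1}/(k+1)!$ the asserted equality reduces to the alternating binomial identity $\sum_{l=0}^{k}(-1)^l\binom{k+1}{l} = (-1)^k$. This in turn is immediate from $\sum_{l=0}^{k+1}(-1)^l\binom{k+1}{l} = (1-1)^{k+1} = 0$ by isolating the top term $l = k+1$.

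The only place that carries genuine algebraic content is the auxiliary identity in part (i); everything else is bookkeeping. The main point requiring care is to arrange the two inductions so as to avoid circularity---establishing $\RotaOp(a\,u^n) = u^{n+1}/(n+1)$ by induction on $n$ as a self-contained statement first, and only afterwards running the induction on $k$. I also note that commutativity of $\Algebra$ is used essentially, precisely in collapsing $\RotaOp(\RotaOp(a)\,a\,u^{n-1})$ to $\RotaOp(a\,u^n)$; in the non-commutative setting this rearrangement fails, which explains why the lemma is stated only for commutative weight-$0$ algebras.
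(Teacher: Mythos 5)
Your proposal is correct and follows essentially the same route as the paper: part (i) rests on applying the weight-$0$ Rota--Baxter relation to the pair $a$ and $a\,\RotaOp(a)^{n-1}$ (your auxiliary identity $\RotaOp\bigl(a\,\RotaOp(a)^n\bigr)=\RotaOp(a)^{n+1}/(n+1)$ is exactly the paper's inductive step, just factored out as a separate induction), and part (ii) is reduced in both cases to the vanishing alternating sum $\sum_{l=0}^{k+1}(-1)^l/\bigl((k+1-l)!\,l!\bigr)=0$. If anything, isolating the auxiliary identity first makes your organization slightly cleaner, since the paper's single induction substitutes the target value of the $(k+1)$-fold term into its own derivation rather than solving for it.
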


\begin{proof}
 We prove \eqref{item:A} by induction on $k$. For $k=0$ the equality is obvious. Assume $$\underbrace{\RotaOp(a\cdots\RotaOp(a}_{n})\cdots) = \frac 1 {n!} \Bra{\RotaOp(a)}^n$$ 
 is satisfied for all $n \leq k$. Using the induction hypothesis and then Rota-Baxter formula \eqref{eq:RotaBaxter}  for $x = a (\RotaOp(a))^{k-1}$ and $y=a$ we get
 \begin{align*}
  \underbrace{\RotaOp(a\cdots\RotaOp(a}_{k+1})\cdots) 
  & = \frac 1 {k!} \RotaOp\Bra{a (\RotaOp(a))^k} \\
  & = \frac 1 {k!} \Bra{ \RotaOp\Bra{a (\RotaOp(a))^{k-1}}\RotaOp(a) - \RotaOp\Bra{\RotaOp\Bra{a (\RotaOp(a))^{k-1}} a}}.
 \end{align*}
Using the induction hypothesis twice for both components we obtain
\begin{align*}
   & = \frac 1 {k!} \Bra{ (k-1)!\underbrace{\RotaOp(a\cdots\RotaOp(a}_{k})\cdots)\RotaOp(a) - (k-1)!\underbrace{\RotaOp(a\cdots\RotaOp(a}_{k+1})\cdots)} \\
  & = \frac 1 {k} \Bra{ \frac 1 {k!} \Bra{\RotaOp(a)}^{k+1} - \frac 1 {(k+1)!} \Bra{\RotaOp(a)}^{k+1}} \\
  & =  \frac 1 {(k+1)!} \Bra{\RotaOp(a)}^{k+1}.
\end{align*}
This ends the proof of \eqref{item:A}.

For \eqref{item:B} it is enough to use result from \eqref{item:A} and a well known formula
\begin{align*}
 \sum_{l=0}^{k+1} \frac {(-1)^l} {(k+1 - l)!\, l!} = 0. 
\end{align*}
\end{proof}

\begin{proof}[Proof of Theorem \ref{thm:MainCommutative}.]
First, let us prove the uniqueness of a solution in both cases. Suppose $b, b' \in \Algebra$ satisfy \eqref{eq:LinearInhomogenEq}. Then $b - b' = \RotaOp(a_1(b - b'))$. Since $a_1 \in \Algebra_1$, by induction on $n\in\NN\cup\Set{0}$, we get that $b-b' \in \Algebra_n$ for all $n\in\NN$. By completeness of $\Algebra$, $b-b' \in \bigcap_n \Algebra_n = \Set 0 $. So $b=b'$.

Now we prove (i), i.e., that \eqref{eq:LinearInhomogenEqSolution} is a solution of \eqref{eq:LinearInhomogenEq} in case $\lambda$ is not a zero divisor of $\Algebra$.
Let $u = \lambda^{-1}\log(1+\lambda a_1)$, so that $a_1 = \lambda^{-1}(\exp(\lambda u)-1)$. In order to show that \eqref{eq:LinearInhomogenEqSolution} is a solution of \eqref{eq:LinearInhomogenEq} it is enough to show
\begin{align}
\label{eq:ToProve}
 \lambda \Exp{\RotaOp(u)}\RotaOp\left(\Exp{-\RotaOp(u)}a_0\right) = 
 \lambda\RotaOp(\Exp{\lambda u}a_0) + \RotaOp\left( \left(\Exp{\lambda u}-1\right) \Exp{\RotaOp(u)}\RotaOp\left(\Exp{-\RotaOp(u)}a_0\right) 
  \right).
\end{align}
Observe that 
\begin{align*}
  \left(\Exp{\lambda u}-1\right) \Exp{\RotaOp(u)}\RotaOp\left(\Exp{-\RotaOp(u)}a_0\right) & =
  \left(\Exp{-\RotaOpT(u)} - \Exp{\RotaOp(u)}\right) \RotaOp\left(\Exp{-\RotaOp(u)}a_0\right).
\end{align*}
Then, using \eqref{eq:RotaBaxter} for $x = \Exp{-\RotaOpT(u)} - \Exp{\RotaOp(u)}$ and $y = \Exp{-\RotaOp(u)}a_0$ we get
\begin{align*}
  \RotaOp\left(\left(\Exp{\lambda u}-1\right) \Exp{\RotaOp(u)}\RotaOp\left(\Exp{-\RotaOp(u)}a_0\right) \right) & =
  \RotaOp\left( \Exp{-\RotaOpT(u)} - \Exp{\RotaOp(u)} \right) \RotaOp\left(  \Exp{-\RotaOp(u)}a_0  \right) 
  \\ & \quad
  - \RotaOp\left( \RotaOp\left(\Exp{-\RotaOpT(u)} - \Exp{\RotaOp(u)} \right) \Exp{-\RotaOp(u)}a_0   \right)
  \\ & \quad
  - \RotaOp\left( \lambda\left(\Exp{-\RotaOpT(u)} - \Exp{\RotaOp(u)}\right)    \Exp{-\RotaOp(u)}a_0  \right).
\end{align*}
By the definition of $\exp$ and formula \eqref{eq:Kingman} we compute that
\begin{align*}
 \RotaOp\left( \Exp{-\RotaOpT(u)} - \Exp{\RotaOp(u)} \right)  & = \lambda \Bra{\Exp{\RotaOpBra{u}} - 1}. 
\end{align*}
Therefore
\begin{multline}
\label{eq:Almost}
  \RotaOp\left( \left(\Exp{\lambda u}-1\right) \Exp{\RotaOp(u)}\RotaOp\left(\Exp{-\RotaOp(u)}a_0\right) \right)  
  =  \\ 
  = \lambda \Bra{\Exp{\RotaOpBra{u}} - 1} \RotaOpBra{\Exp{-\RotaOpBra u}a_0}
  - \lambda \RotaOpBra{\Bra{\Exp{\RotaOpBra{u}} - 1} {\Exp{-\RotaOpBra u}a_0}} \\
  - \RotaOp\left( \lambda\left(\Exp{-\RotaOpT(u)} - \Exp{\RotaOp(u)}\right)    \Exp{-\RotaOp(u)}a_0  \right)  \\
  = \lambda \Bra{\Exp{\RotaOpBra{u}} - 1} \RotaOpBra{\Exp{-\RotaOpBra u}a_0} 
  - \lambda\RotaOp\left(\left(\Exp{-\RotaOpT(u)} - 1\right)    \Exp{-\RotaOp(u)}a_0  \right).
\end{multline}
Since $-\RotaOpTBra{u} - \RotaOpBra{u} = \lambda u$, it is easy to see that the right side transforms to
\begin{align*}
 \RotaOp\left( \left(\Exp{\lambda u}-1\right) \Exp{\RotaOp(u)}\RotaOp\left(\Exp{-\RotaOp(u)}a_0\right) \right) =
 \lambda\Bra{\Exp{\RotaOpBra u}\RotaOpBra{\Exp{-\RotaOpBra u}a_0} - \RotaOpBra{\Exp{\lambda u}a_0}}.
\end{align*}
This is exactly \eqref{eq:ToProve}.
Finally, it is easy to see that
\begin{align*}
 \sum_{n=0}^\infty \underbrace{\RotaOp(a_1\cdots\RotaOp(a_1}_{n}\RotaOp((1 + \lambda a_1)a_0))\cdots) 
\end{align*}
is a solution of \eqref{eq:LinearInhomogenEq}. Since $n$-th component of this sum is in $\Algebra_{n+1}$, the series is convergent. From the uniqueness of the solution we conclude \eqref{eq:TheEquality}.

In order to prove (ii), i.e., that \eqref{eq:LinearInhomogenEqSolutionZero} is a solution of \eqref{eq:LinearInhomogenEq} for $\lambda = 0$ we proceed slightly differently. By Lemma \ref{lem:Iteration} \eqref{item:A} and the definition of $\exp$ the formula \eqref{eq:LinearInhomogenEqSolutionZero} is equivalent to
\begin{align*}
 b & = \sum_{n=0}^\infty \sum_{m=0}^\infty \underbrace{\RotaOp(a_1\cdots\RotaOp(a_1}_{n})\cdots)
 \RotaOp\Bra{(-1)^m\underbrace{\RotaOp(a_1\cdots\RotaOp(a_1}_{m})\cdots) a_0} \\
 & = \sum_{k=0}^\infty \sum_{l=0}^k (-1)^l \underbrace{\RotaOp(a_1\cdots\RotaOp(a_1}_{k-l})\cdots)
 \RotaOp\Bra{\underbrace{\RotaOp(a_1\cdots\RotaOp(a_1}_{l})\cdots) a_0}.
\end{align*}
On the other hand, by iterating the considered equation \eqref{eq:LinearInhomogenEq} it is easy to see that
\begin{align*}
 \sum_{k=0}^\infty \underbrace{\RotaOp(a_1\cdots\RotaOp(a_1}_{k}\RotaOp(a_0))\cdots) 
\end{align*}
is also a solution of \eqref{eq:LinearInhomogenEq}. To complete the proof of (ii) we show, using induction on $k$, that
\begin{align*}
 \underbrace{\RotaOp(a_1\cdots\RotaOp(a_1}_{k}\RotaOp(a_0))\cdots) =
 \sum_{l=0}^k (-1)^l \underbrace{\RotaOp(a_1\cdots\RotaOp(a_1}_{k-l})\cdots)
 \RotaOp\Bra{\underbrace{\RotaOp(a_1\cdots\RotaOp(a_1}_{l})\cdots) a_0}.
\end{align*}
For $k=0$ on both sides there is $\RotaOp(a_0)$. Assuming the above equality is correct, we show
\begin{align*}
 \RotaOp\Bra{a_1\underbrace{\RotaOp(a_1\cdots\RotaOp(a_1}_{k}\RotaOp(a_0))\cdots)} = 
\sum_{l=0}^k (-1)^l \RotaOp\Bra{a_1\underbrace{\RotaOp(a_1\cdots\RotaOp(a_1}_{k-l})\cdots)
 \RotaOp\Bra{\underbrace{\RotaOp(a_1\cdots\RotaOp(a_1}_{l})\cdots) a_0}}
 \end{align*}
From the R-B formula \eqref{eq:RotaBaxter}  for $x = a_1\underbrace{\RotaOp(a_1\cdots\RotaOp(a_1}_{k-l})\cdots)$ 
and $y=\underbrace{\RotaOp(a_1\cdots\RotaOp(a_1}_{l})\cdots) a_0$ the right side is equal
\begin{align*}
 & = \sum_{l=0}^k (-1)^l \underbrace{\RotaOp(a_1\cdots\RotaOp(a_1}_{k+1-l})\cdots)
 \RotaOp\Bra{\underbrace{\RotaOp(a_1\cdots\RotaOp(a_1}_{l})\cdots) a_0} \\
 & \qquad - \sum_{l=0}^k (-1)^l \RotaOp\Bra{\underbrace{\RotaOp(a_1\cdots\RotaOp(a_1}_{k+1-l})\cdots)
 \underbrace{\RotaOp(a_1\cdots\RotaOp(a_1}_{l})\cdots) a_0}.
\end{align*}
Using Lemma \ref{lem:Iteration}\eqref{item:B} for the second line we conclude that
\begin{align*}
 \underbrace{\RotaOp(a_1\cdots\RotaOp(a_1}_{k+1}\RotaOp(a_0))\cdots) =
 \sum_{l=0}^{k+1} (-1)^l \underbrace{\RotaOp(a_1\cdots\RotaOp(a_1}_{k+1-l})\cdots)
 \RotaOp\Bra{\underbrace{\RotaOp(a_1\cdots\RotaOp(a_1}_{l})\cdots) a_0},
\end{align*}
which is what we want to prove.

Finally, the generalized Spitzer's identities \eqref{eq:TheEquality} and \eqref{eq:TheEqualityZero} holds true by repeated use of \eqref{eq:LinearInhomogenEq}.
\end{proof}

\begin{proof}[Proof of Theorem \ref{thm:MainNonCommutative}.]
The uniqueness of the solution in both cases comes by the same reasoning as in the proof of Theorem \ref{thm:MainCommutative}.

The proof of $(i)$ is also similar to that of Theorem \ref{thm:MainCommutative}. In fact we only need to take care on the order of factors and use formulas
\begin{align}
 \label{eq:BCH-CHLBis}
 \Exp{\lambda u}\Exp {\RotaOp\Bra{\CHL(u)}} &= \Exp{-\RotaOpT\Bra{\CHL(u)}}, \\
 \label{eq:BCH-CHLTercio}
 \Exp{\lambda u} &= \Exp{-\RotaOpT\Bra{\CHL(u)}}\Exp {-\RotaOp\Bra{\CHL(u)}}, \qquad u\in\Algebra_1,
\end{align}
when needed. Note that they both comes from the equality \eqref{eq:BCH-CHL}. Let us sketch the proof briefly.
As previously, we assume $u = \lambda^{-1}\log(1+\lambda a_1)$, so that $a_1 = \lambda^{-1}(\exp(\lambda u)-1)$ and we want to show
\begin{align}
\label{eq:ToProveNonCom}
 \lambda \Exp{\RotaOp(\CHL(u))}\RotaOp\left(\Exp{-\RotaOp(\CHL(u))}a_0\right) = 
 \lambda\RotaOp(\Exp{\lambda u}a_0) + \RotaOp\left( \left(\Exp{\lambda u}-1\right) \Exp{\RotaOp(\CHL(u))}\RotaOp\left(\Exp{-\RotaOp(\CHL(u))}a_0\right) 
  \right).
\end{align}
Using  \eqref{eq:BCH-CHLBis} we get
\begin{align*}
  \left(\Exp{\lambda u}-1\right) \Exp{\RotaOp(\CHL(u))}\RotaOp\left(\Exp{-\RotaOp(\CHL(u))}a_0\right) & =
  \left(\Exp{-\RotaOpT(\CHL(u))} - \Exp{\RotaOp(\CHL(u))}\right) \RotaOp\left(\Exp{-\RotaOp(\CHL(u))}a_0\right).
\end{align*}
Now we do the same as in the previous proof changing $\RotaOp(u)$ and $\RotaOpT(u)$ into $\RotaOp(\CHL(u))$ and $\RotaOpT(\CHL(u))$, respectively, until we obtain an analog of the equation \eqref{eq:Almost}
\begin{multline*}
  \RotaOp\left( \left(\Exp{\lambda u}-1\right) \Exp{\RotaOp(\CHL(u))}\RotaOp\left(\Exp{-\RotaOp(\CHL(u))}a_0\right) \right)  
  = \\
  \lambda \Bra{\Exp{\RotaOpBra{\CHL(u)}} - 1} \RotaOpBra{\Exp{-\RotaOpBra{ \CHL(u)}}a_0} 
  - \lambda\RotaOp\left(\left(\Exp{-\RotaOpT(\CHL(u))} - 1\right)    \Exp{-\RotaOp(\CHL(u))}a_0  \right).
\end{multline*}
Now, using \eqref{eq:BCH-CHLTercio} in the second summand and simplifying the right side we get
\begin{align*}
 \RotaOp\left( \left(\Exp{\lambda u}-1\right) \Exp{\RotaOp(\CHL(u))}\RotaOp\left(\Exp{-\RotaOp(\CHL(u))}a_0\right) \right) =
 \lambda\Bra{\Exp{\RotaOpBra{\CHL (u)}}\RotaOpBra{\Exp{-\RotaOpBra{ \CHL(u)}}a_0} - \RotaOpBra{\Exp{\lambda u}a_0}}.
\end{align*}
So we obtain \eqref{eq:ToProveNonCom}. 
Now, as previously it is easy to see that
\begin{align*}
 \sum_{n=0}^\infty \underbrace{\RotaOp(a_1\cdots\RotaOp(a_1}_{n}\RotaOp((1 + \lambda a_1)a_0))\cdots) 
\end{align*}
is a solution of \eqref{eq:LinearInhomogenEqNonCom},  and since $n$-th component of this sum is in $\Algebra_{n+1}$, the series is convergent. From the uniqueness of the solution we conclude \eqref{eq:TheEqualityNonCom}.

By repeated use of \eqref{eq:LinearInhomogenEqNonCom} we conclude that if $\lambda$ is not a zero divisor of $\Algebra$, then the generalized non-commutative Spitzer's identity \eqref{eq:TheEqualityNonCom} holds true.

The proof of $(ii)$ is quite the same and we omit it.
\end{proof}

\section{Eulerian identities}
\label{sec:Eulerian}

In this section we prove Proposition \ref{prop:Eulerian} and Proposition \ref{prop:EulerianTwo}.
Let $1\neq q\in\QQ$. Consider  the algebra of formal power series in variable $t$ with rational coefficients $\QQ[[t]]$, and  the $q$-integral $\QInt : \QQ[[t]] \to \QQ[[t]]$ given by $\QInt(f)(t) = \sum_{k=1}^\infty f(q^kt)$ so that $\QInt(t^n)(t) = \frac{q^nt^n}{1-q^n}$. It is known that $(\QQ[[t]],\QInt)$ is a commutative Rota-Baxter algebra of weight $1$ (see \cite[Example 1.1.8]{guo2012introduction}). Take $\QQ[[t]]_n$ as power series of order not less than  $n$, i.e., 
$$
\QQ[[t]]_n = \SetSuchThat{\sum_{k=n}^\infty a_k t^k}{ a_i\in\QQ,\, i = n,n+1,\ldots}.
$$ 
Then $(\QQ[[t]],\QQ[[t]]_n ,\QInt)$ is a commutative complete filtered Rota-Baxter algebra of weight $1$.
We will use the following Eulerian identity 
\begin{align}
\label{eq:EuerianIdentity}
 1 + \sum_{n=1}^\infty \frac{q^{\frac{n(n+1)}{2}-1}}{(1 -q)\cdots(1-q^{n})}t^{n} = \prod_{n=1}^\infty \Bra{1+q^nt},
\end{align}
which is Spitzer's identity \eqref{eq:Spitzer} in this algebra applied for $a = t$ (see \cite[Example 1.3.7]{guo2012introduction}). In particular, it will be important that
\begin{align}
\label{eq:EulerianSecond}
 \exp \Bra{\QInt\left(\log(1+ t) \right) } = 
 \prod_{n=1}^\infty \Bra{1+q^nt}.
\end{align}

\begin{proof}[Proof of Proposition \ref{prop:Eulerian}.]
Let us compute the outcome of \eqref{eq:TheEquality} in $(\QQ[[t]],\QQ[[t]]_n,\QInt)$ for $a_0 = a_1 = t$.
The left side of \eqref{eq:TheEquality}. 
First $\QInt((1 + a_1)a_0) = \QInt(t + t^2) = \frac{qt}{1-q} + \frac{q^2t^2}{1-q^2}$. Using induction on $n$ it is easy to see that 
\begin{align*}
\underbrace{\QInt(a_1\cdots\QInt(a_1}_{n}\QInt((1 + \lambda a_1)a_0))\cdots) = 
 \frac{q^{\frac{(n+1)(n+2)}{2}}}{(1 -q)\cdots(1-q^{n+1})}t^{n+1} + \frac{q^{\frac{(n+2)(n+3)}{2}-1}}{(1 -q^2)\cdots(1-q^{n+2})}t^{n+2}.
\end{align*}
Summing up these expressions, we get
\begin{align}
\label{eq:EulerianFirst}
  \sum_{n=0}^\infty \underbrace{\QInt(a_1\cdots\QInt(a_1}_{n}\QInt((1 + \lambda a_1)a_0))\cdots) & 
  = \frac q {1-q} t + \sum_{n=1}^\infty \frac{q^{\frac{(n+1)(n+2)}{2}}}{(1 -q)\cdots(1-q^{n+1})}\Bra{1 + \frac{1-q} q}t^{n+1}
  \nonumber \\
  & = \frac q {1-q} t + \sum_{n=2}^\infty \frac{q^{\frac{n(n+1)}{2}-1}}{(1 -q)\cdots(1-q^{n})}t^{n}
  \nonumber \\
  & = - (1+ t) + 1 + \sum_{n=1}^\infty \frac{q^{\frac{n(n+1)}{2}-1}}{(1 -q)\cdots(1-q^{n})}t^{n}
  \nonumber \\
  & = - (1+t) + \prod_{n=1}^\infty \Bra{1+q^nt},
\end{align}
where the last equality follows from an Eulerian identity \eqref{eq:EuerianIdentity}.

The right side of \eqref{eq:TheEquality}.
The first factor is just the formula \eqref{eq:EulerianSecond}.
According to the second factor, lets compute first
\begin{align}
\label{eq:ComputationOne}
\begin{split}
 \exp \Bra{-\QInt(\log(1+t))} 
 & = \exp \Bra{ \QInt\Bra{\sum_{n=1}^\infty \frac{(-1)^n} n t^n}}
  = \exp \Bra{\sum_{n=1}^\infty \frac{(-t)^n} n \frac {q^n} {1-q^n}}
 \\ & = \exp \Bra{\sum_{n=1}^\infty \frac{(-t)^n} n \sum_{k=1}^\infty q^{nk}}
  = \exp \Bra{\sum_{k=1}^\infty \sum_{n=1}^\infty \frac{(-q^kt)^n} n }
 \\ & = \prod_{k=1}^\infty \exp \Bra{ -\log \Bra{1 + q^kt}}
  = \prod_{k=1}^\infty \frac 1 {1 + q^kt}.
 \end{split}
 \end{align}
 In order to compute $\QInt\Bra{\exp \Bra{-\QInt(\log(1+t))}t}$ we need the following lemma (it roughly follows from the unproven Example 1.3.8 in  \cite{guo2012introduction}).

 \begin{lemma}It follows that
  $$\prod_{k=1}^\infty \frac 1 {1 + q^kt} = (1+t)\Bra{1 + \sum_{n=1}^\infty \frac{(-t)^n}{(1 -q)\cdots(1-q^{n})}}$$
 \end{lemma}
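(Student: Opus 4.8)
The plan is to prove the identity by showing that both sides, viewed as power series in $t$, satisfy the same $q$-difference equation together with the same value at $t=0$, and then invoking uniqueness of the power-series solution. Write $F(t)=\prod_{k=1}^\infty \frac{1}{1+q^k t}$ for the left-hand side, and $G(t)=(1+t)\,S(t)$ for the right-hand side, where $S(t)=1+\sum_{n=1}^\infty \frac{(-t)^n}{(1-q)\cdots(1-q^n)}$; note that both $F$ and $G$ have constant term $1$. The target equation will be $H(qt)=(1+qt)\,H(t)$ with $H(0)=1$.

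First I would read off the functional equation of the product. Replacing $t$ by $qt$ shifts the index of the product by one: $F(qt)=\prod_{k=1}^\infty \frac{1}{1+q^{k+1}t}=\prod_{j=2}^\infty \frac{1}{1+q^{j}t}$, and since $F(t)=\frac{1}{1+qt}\prod_{j=2}^\infty\frac{1}{1+q^{j}t}$, this gives $F(qt)=(1+qt)\,F(t)$. Next I would establish the companion equation for $S$. Writing $c_n=(1-q)\cdots(1-q^n)$, so that $c_n=c_{n-1}(1-q^n)$, the difference $S(t)-S(qt)=\sum_{n\ge 1}\frac{(-t)^n(1-q^n)}{c_n}=\sum_{n\ge 1}\frac{(-t)^n}{c_{n-1}}$ telescopes to $-t\,S(t)$, i.e.\ $S(qt)=(1+t)\,S(t)$. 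Consequently $G(qt)=(1+qt)\,S(qt)=(1+qt)(1+t)\,S(t)=(1+qt)\,G(t)$, so $G$ solves the very same equation as $F$ with the same initial value.

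It then remains to check uniqueness. Expanding $H(t)=\sum_{m\ge 0}h_m t^m$, the equation $H(qt)=(1+qt)H(t)$ forces $h_m(q^m-1)=q\,h_{m-1}$ for $m\ge 1$, together with $h_0=1$; as each factor $q^m-1$ is invertible, every coefficient is determined recursively, so there is exactly one such series and therefore $F=G$, which is the claim. The only real point of care, and the place I would be explicit, is the status of $q$: the coefficients of both sides are rational functions of $q$ (already the coefficient of $t$ in the product is $-q/(1-q)$, and the $q$-integral itself produces the denominators $1-q^n$), so the cleanest formulation treats $q$ as an indeterminate and works in $\QQ(q)[[t]]$, where the $1-q^n$ are automatically nonzero; the identity for any fixed rational $q$ with all $q^n\neq 1$ then follows by specialization. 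Beyond this bookkeeping I expect no obstacle, since the two functional-equation computations are short and elementary; the slightly delicate step is simply getting the telescoping for $S$ and the index shift for $F$ exactly right so that both land on the identical equation $H(qt)=(1+qt)H(t)$.
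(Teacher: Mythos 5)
Your proof is correct, but it takes a genuinely different route from the paper's. The paper derives this identity as yet another instance of Spitzer's identity \eqref{eq:Spitzer}: it introduces the auxiliary operator $\QIntTwo(t^n) = \frac{t^n}{1-q^n}$, which makes $(\QQ[[t]],\QQ[[t]]_n,\QIntTwo)$ a complete filtered Rota--Baxter algebra of weight $-1$, computes the exponential side of Spitzer's identity for $a=t$ to be $\prod_{k=0}^\infty \frac{1}{1-q^kt}$ (by the same manipulation as in \eqref{eq:ComputationOne}), computes the iterated-operator side to be $1+\sum_{n\ge 1}\frac{t^n}{(1-q)\cdots(1-q^n)}$ by induction, and then substitutes $-t$ for $t$. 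You instead give an elementary, self-contained argument: both sides satisfy the first-order $q$-difference equation $H(qt)=(1+qt)H(t)$ with $H(0)=1$ --- the product via an index shift, the sum via the telescoping $S(t)-S(qt)=-t\,S(t)$ --- and that equation has a unique power-series solution because each $q^m-1$ is invertible. Your index-shift, telescoping, and uniqueness steps all check out, and your remark about treating $q$ as an indeterminate (working in $\QQ(q)[[t]]$ and specializing) is in fact more careful than the paper, which freely expands $\frac{1}{1-q^n}=\sum_{k}q^{nk}$ for a fixed rational $q$ without comment on convergence or on excluding $q=-1$. What the paper's route buys is thematic coherence: the lemma becomes one more illustration of the Rota--Baxter machinery the article is about, reusing Spitzer's identity and an already-displayed computation. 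What your route buys is independence from that machinery (no appeal to \eqref{eq:Spitzer} in a second algebra), cleaner handling of the formal-variable issues, and a proof that is essentially the classical functional-equation derivation of Euler's $q$-series identities.
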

 
\begin{proof}
 Let a linear operator $\QIntTwo : \QQ[[t]] \to \QQ[[t]]$ be given on the homogeneous polynomials by $\QIntTwo(t^n)(t) = \frac{t^n}{1-q^n}$. It is known that $(\QQ[[t]],\QQ[[t]]_n ,\QIntTwo)$ is a Rota-Baxter algebra of weight $-1$ (see \cite[Example 1.1.9]{guo2012introduction}) and it is easy to see that it is complete filtered. In this algebra we use Spitzer's identity \eqref{eq:Spitzer} for $a = t$. Proceeding like in \eqref{eq:ComputationOne} we obtain
 \begin{align*}
  \exp \Bra{\QIntTwo\Bra{-\log(1-t)}} = \prod_{k=0}^\infty \frac 1 {1 - q^kt}.
 \end{align*}
By induction on $n$ we also get that 
$$
\underbrace{\QIntTwo(a_1\cdots\QIntTwo(a_1}_{n}\QIntTwo((1 + \lambda a_1)a_0))\cdots) = \frac{t^n}{(1-q)\cdots(1-q^n)}.
$$ 
From \eqref{eq:Spitzer} we therefore have
\begin{align*}
\frac 1 {1 - t} \prod_{k=1}^\infty \frac 1 {1 - q^kt} = 1 + \sum_{n=1}^\infty \frac{t^n}{(1-q)\cdots(1-q^n)}.
\end{align*}
Taking $-t$ instead of $t$ we obtain the formula.
\end{proof}

We continue the proof of Proposition \ref{prop:Eulerian}.
Using this lemma and equality \eqref{eq:ComputationOne} we see that
\begin{align}
\label{eq:EulerianThird}
\QInt\Bra{\exp \Bra{-\QInt(\log(1+t))}t}
 & = \QInt\Bra{t(1+t)\Bra{1 + \sum_{n=1}^\infty \frac{(-t)^n}{(1 -q)\cdots(1-q^{n})}}}
 \nonumber \\ & = \QInt\Bra{t + \sum_{n=0}^{\infty}\frac{(-1)^{n+1}q^{n+1}t^{n+2}}{(1 -q)\cdots(1-q^{n+1})}}
 \nonumber \\ & = \frac {qt}{1-q} + \sum_{n=0}^{\infty}\frac{(-1)^{n+1}q^{2n+3}t^{n+2}}{(1 -q)\cdots(1-q^{n+2})}
 \nonumber \\ & = - \sum_{n=0}^\infty \frac{q^{2n+1}(-t)^{n+1}}{(1 -q)\cdots(1-q^{n+1})}.
\end{align}
Putting \eqref{eq:EulerianSecond}, \eqref{eq:EulerianFirst}, \eqref{eq:EulerianThird} into \eqref{eq:TheEquality} we get
\begin{align*}
 -(1+t) + \prod_{n=1}^\infty \Bra{1+q^nt} =\Bra{- \sum_{n=0}^\infty \frac{q^{2n+1}(-t)^{n+1}}{(1 -q)\cdots(1-q^{n+1})}} \prod_{n=1}^\infty \Bra{1+q^nt}.
\end{align*}
Changing $-t$ to $t$, and after simple transformations we conclude that
\begin{align*}
 1 + \sum_{n=1}^\infty \frac{q^{2n-1}t^{n}}{(1 -q)\cdots(1-q^{n})} = (1-t)\prod_{n=1}^\infty \frac 1 {1-q^nt}.
\end{align*}
\end{proof}

\begin{proof}[Proof of Proposition \ref{prop:EulerianTwo}]
Let us compute \eqref{eq:SpecialEquality} in the R-B algebra $(\QQ[[t]],\QQ[[t]]_n,\QInt)$ for $a_1 = t$. By \eqref{eq:ComputationOne}, Lemma \ref{lem:Iteration}, and a straightforward calculation we see that
\begin{align*}
 \QInt\Bra{\Exp {-\QInt\left(\log(1+ t) \right) } \frac {a_1}{1+ a_1}} 
 & = \QInt\Bra{ \frac t {1+t} \prod_{n=1}^\infty \frac 1 {1+q^nt}} \\
 & = \QInt\Bra{ t \Bra{1 + \sum_{n=1}^\infty \frac{(-t)^n}{(1 -q)\cdots(1-q^{n})}}} \\
 & = - \sum_{n=1}^\infty \frac{q^n(-t)^n}{(1 -q)\cdots(1-q^{n})}.
\end{align*}
Therefore the formula \eqref{eq:SpecialEquality} reads as
\begin{align*}
 1 + \sum_{n=1}^\infty \frac{q^n(-t)^n}{(1 -q)\cdots(1-q^{n})} = \prod_{n=1}^\infty \frac 1 {1+q^nt}.
\end{align*}
Changing $t$ for $-t$ gives the result.
\end{proof}

\section{Concluding remarks}

In this article we proved a generalization of Spitzer's identity in a complete filtered Rota-Baxter algebra of weight $\lambda$. The idea comes from the formula for the solution of non-homogeneous linear differential equation. This suggest that other formulas 
in Rota-Baxter algebras can also be derived if we imitate solutions of other types of differential equations, like Bernoulli equation, Riccati equation, Abel equation, higher order linear equations, etc. 


\section*{Acknowledgements}

The author was partially supported by the Polish Ministry of Research and Higher Education grant NN201 607540, 2011-2014.

\bibliographystyle{acm}
\bibliography{bibliography}

\end{document}